\newtheorem{thm}{Theorem}[section]
\newtheorem{cor}[thm]{Corollary}
\newtheorem{lem}[thm]{Lemma}
\newtheorem{prop}[thm]{Proposition}
\theoremstyle{definition}
\newtheorem{defn}[thm]{Definition}
\newtheorem{rem}[thm]{Remark}
\newcommand{\thmref}[1]{Theorem~\ref{#1}}
\newcommand{\lemref}[1]{Lemma~\ref{#1}}
\newcommand{\secref}[1]{Section~\ref{#1}}
\numberwithin{equation}{section}
\newcommand\g{\gamma}
\renewcommand\d{\delta}
\newcommand\e{\varepsilon}
\renewcommand\l{\lambda}
\newcommand\G{\Gamma}
\newcommand\f{\frac}
\renewcommand{\div}{\mid}
\newcommand{\Z}{{\mathbb{Z}}}
\newcommand{\R}{{\mathbb{R}}}
\newcommand{\C}{{\mathbb{C}}}
\newcommand{\A}{{\mathcal{A}}}
\newcommand{\Q}{{\mathbb{Q}}}
\renewcommand\Re{\text{Re~}}
\newcommand{\Cl}{\operatorname{Cl}}
\newcommand{\End}{\operatorname{End}}
\newcommand{\Ell}{\operatorname{Ell}}
\newcommand{\triv}{\operatorname{triv}}
\newcommand{\iso}{\cong}
\renewcommand\O{{\mathcal O}}
\newcommand\F{{\mathbb F}}
\renewcommand\i{^{-1}}
\renewcommand\({\left(}
\renewcommand\){\right)}
\newcommand{\ignore}[1]{}
\newcommand{\myignore}[1]{}
\newcommand{\mymyignore}[1]{}
\begin{document}

\title{Expander graphs based on GRH with an application to elliptic curve cryptography}
\date{January 14, 2008}
\author{David Jao\thanks{Partially supported by NSERC Discovery Grant \#341769-07}, Stephen D. Miller\thanks{Partially supported by NSF grant DMS-0601009 and an Alfred P.
Sloan Foundation Fellowship}, and Ramarathnam Venkatesan}

\maketitle

\begin{abstract}
We present a construction of expander graphs obtained from Cayley graphs
of narrow ray class groups, whose eigenvalue bounds follow from the
Generalized Riemann Hypothesis.  Our result implies that the Cayley
graph of $(\Z/q\Z)^*$ with respect to small prime generators is an
expander.  As another application, we show that the graph of small prime
degree isogenies between ordinary elliptic curves achieves
non-negligible eigenvalue separation, and  explain the relationship
between the expansion properties of these graphs and the security of the
elliptic curve discrete logarithm problem.
\end{abstract}

\section{Introduction}

Expander graphs are widely studied in many areas of mathematics and
theoretical computer science, and such graphs are useful primarily because random walks along their edges
quickly become uniformly distributed over their vertices. Several
beautiful constructions of expanders have been based on deep tools
from representation theory and arithmetic, for example Kazhdan's
Property (T) \cite{mar} and the Ramanujan conjectures
\cite{mar2,LPS}.

The main contribution of this paper is a new, conditional construction
of expanders based on the Generalized Riemann Hypothesis (GRH), which
arises naturally in the study of the elliptic curve discrete logarithm
problem. This cryptographic connection is investigated in our parallel
paper \cite{jmv}, where it is used to establish that the discrete
logarithm problem has roughly uniform difficulty for equal sized curves. The present
paper contains a generalization of the main theorem in that paper,
along with explanations and applications of a more mathematical nature.

We briefly review some notions from graph theory, including that of
\emph{expander graph} from above. By an \emph{undirected graph}
$\G=(\mathcal V,\mathcal E)$ we mean a set of vertices $\mathcal V$
and (unoriented) edges $\mathcal E$ connecting specified pairs of
vertices.  Suppose that the graph is finite and is furthermore
\emph{$k$-regular}, meaning that there are exactly $k$ edges incident
to each vertex. The \emph{adjacency operator} $A$ acts on functions on
$\mathcal V$ by averaging them over neighbors:
\begin{equation}\label{adjc}
    (Af)(x) \ \ =  \ \ \sum_{\text{$x$ and $y$ connected by an
    edge}} f(y)\,.
\end{equation}
Since the graph is regular, the constant function
$\mathbbm{1}(x)=1$ is an eigenfunction of $A$ with eigenvalue $k$,
which is accordingly termed the \emph{trivial eigenvalue}
$\l_{\triv}$ of $A$. It is straightforward to see that the
multiplicity of $\l_{\triv}$ is equal to the number of connected
components of the graph, and that $\l_{\triv}$  is the largest
eigenvalue of $A$ in absolute value. An \emph{expander} graph is a
graph for which the nontrivial eigenvalues satisfy the bound
\begin{equation}\label{expandcondition}
      \l  \ \ \le \ \ \l_{\triv}\,(1  -  \d) \ \ \ \, \ \text{for some
      fixed
      constant $\ \d \,>\,0$\,.}
\end{equation}  If the nontrivial eigenvalues further satisfy the stronger
bound
\begin{equation}\label{expandcondition2}
      |\l|  \ \ \le \ \ \l_{\triv}\,(1  -  \d)\, ,
\end{equation} then a standard lemma (e.g.~\lemref{mixingtime}) shows that random walks of length
$\f{1}{\d}\log 2|\mathcal V|$ are equidistributed in the sense that
they land in arbitrary subsets of $\mathcal V$ with probability at
least proportional to their size. This rapid mixing of the random
walk is at the heart of most, if not nearly all, applications of
expanders.

A group $G$ generated by a subset $S=S\i$ can be made into the vertices of a
{\em
Cayley graph} $Cay(G,S)$  by defining edges from  $g$ to $sg$, for
each $s\in S$ and $g\in G$.\footnote{Note that all graphs in this paper are undirected.  We also allow for multiple edges by letting $S$ be a multiset when necessary, such as in the statement of \thmref{expthm}.} For finite abelian groups, the
eigenfunctions of $A$ are precisely the characters
$\chi:G\rightarrow \C^*$; indeed, the formula
\begin{equation}\label{abelcayeign}
    (A\chi)(x) \ \ = \ \ \sum_{s\in S} \chi(sx) \ \ = \ \ \l_\chi\,\chi(x) \
    , \ \ \text{where~} \l_\chi \ = \ \sum_{s\in S}\chi(s)\, ,
\end{equation}
shows that the spectrum consists of character sums ranging over the
generating set. The trivial eigenvalue $\l_{\triv}=|S|$ of course
comes from the trivial character $\chi={\mathbbm 1}$, and inequality
(\ref{expandcondition2}) is satisfied if the character sums for
$\l_\chi$, $\chi\not\equiv  {\mathbbm 1}$, have enough cancellation.
Abelian Cayley graphs are a restricted yet important type of graph,
and their expansion properties have been well studied
(e.g.~\cite{alonreich,lubweiss}). To be expanders, they cannot have bounded
degree but must have at least $\Omega(\log|G|)$ generators.

The expander graphs produced by our construction are abelian Cayley
graphs, and we give eigenvalue bounds for their character sums $\l_\chi$
using GRH.  Before stating the construction, we briefly recall some
terminology.  For any integral ideal $\mathfrak{m}$  in a number field $K$,
let $I_{\mathfrak{m}}$ denote the group of fractional ideals relatively
prime to $\mathfrak{m}$ (i.e.~those whose factorization into prime ideals
contains no divisor of $\mathfrak{m}$).  Let $P_{\mathfrak{m}}$ denote the
principal ideals generated by an element $k\in K^*$ such that $k\equiv
1\!\!\pmod{\mathfrak{m}}$, and let $P_{\mathfrak{m}}^+\subset P_{\mathfrak{m}}$
denote those generated by such an element $k$ which is furthermore
totally positive (i.e.~positive in all embeddings $K\hookrightarrow
\R$). The quotients $I_{\mathfrak{m}}/P_{\mathfrak{m}}$ and $I_{\frak
  m}/P_{\mathfrak{m}}^+$ are called, respectively, the ray and narrow ray
class groups of $K$ relative to $\mathfrak{m}$.

\begin{thm}\label{expthm}(``GRH Graphs'').
Let $K$ be a number field of degree $n$, $\mathfrak{m}$ an integral
ideal, and $G$ the narrow ray class group of $K$ relative to $\frak
m$. Let $q=D\cdot N\mathfrak{m}$, where $D$ is the discriminant of $K$
and $N\mathfrak{m}$ denotes the norm of $\mathfrak{m}$. Consider the set
$\{$prime ideals ${\frak p}$ coprime to $\mathfrak{m} \, | \, N{\frak
p}\le x$ is prime$\}$, and let $S_x$ denote the multiset consisting of its
image and inverse in $G$ (i.e., including multiplicities).  Then
assuming GRH for the characters of $G$, the graph $\G_x=Cay(G,S_x)$
has
\begin{equation}\label{expthmtriv}
    \l_{\triv} \ \ = \ \ 2\, \operatorname{li}(x) \ + \
    O(n\,x^{1/2}\log(xq))\,   , \ \ \ \ \operatorname{li}(x)\,=\,\int_2^x\f{dt}{\log t}\,,
\end{equation}
while the nontrivial eigenvalues $\l$ obey the bound
\begin{equation}\label{expthmnontriv}
|\l| \ \ = \ \  O(n\,x^{1/2}\log(xq))\,.
\end{equation}
In particular, if $B>2$ and $x\geq (\log q)^{B}$,
\begin{equation}\label{expthmbd}
    |\l| \ \ = \  \  O\((\l_{\triv}\log \l_{\triv})^{1/2+1/B}\)\,.
\end{equation}
The implied constants in (\ref{expthmtriv}) and
(\ref{expthmnontriv}) are absolute, while the one in
(\ref{expthmbd}) depends only on $B$ and $n$.
\end{thm}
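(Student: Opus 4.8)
The plan is to reduce everything to a single uniform estimate for character sums over primes in the narrow ray class group, and then to derive each of the three displayed bounds as a routine consequence. By equation \eqref{abelcayeign}, for a character $\chi$ of $G$ the eigenvalue of the adjacency operator of $\G_x$ is $\l_\chi=\sum_{s\in S_x}\chi(s)$, which by the definition of $S_x$ as the image-plus-inverse multiset equals $\sum_{N{\frak p}\le x,\ {\frak p}\nmid{\frak m}}(\chi({\frak p})+\overline{\chi({\frak p})})=2\operatorname{Re}\sum_{N{\frak p}\le x}\chi({\frak p})$, where the sum is over prime ideals with prime norm at most $x$. (Prime powers with $N{\frak p}\le x$ contribute only $O(x^{1/2}\log x)$ in total, absorbed into the error term, so we may equally well work with the von Mangoldt-weighted sum $\psi(x,\chi)=\sum_{N{\frak a}\le x}\Lambda({\frak a})\chi({\frak a})$ and convert back by partial summation.) Thus the entire theorem rests on the estimate
\begin{equation}\label{keyhecke}
\sum_{\substack{{\frak p}\ \text{prime},\ N{\frak p}\le x\\ {\frak p}\nmid{\frak m}}}\chi({\frak p}) \ \ = \ \ \begin{cases}\operatorname{li}(x)+O(n\,x^{1/2}\log(xq)), & \chi=\mathbbm 1,\\ O(n\,x^{1/2}\log(xq)), & \chi\ne\mathbbm 1,\end{cases}
\end{equation}
valid under GRH for the Hecke $L$-functions $L(s,\chi)$ attached to the characters $\chi$ of the narrow ray class group $G$.

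The core step is therefore the proof of \eqref{keyhecke}. I would follow the standard explicit-formula / Perron argument, exactly as in the classical proof of the prime number theorem for arithmetic progressions under the Riemann Hypothesis, but keeping careful track of the dependence on the conductor and the degree. Concretely: apply a truncated Perron formula to $\psi(x,\chi)=\frac{1}{2\pi i}\int_{c-iT}^{c+iT}\big(-\frac{L'}{L}(s,\chi)\big)\frac{x^s}{s}\,ds+(\text{error})$ with $c=1+1/\log x$; shift the contour to $\operatorname{Re}(s)=-1/2$; pick up the pole at $s=1$ (present only when $\chi=\mathbbm 1$, contributing the main term $x$, which partial summation converts to $\operatorname{li}(x)$) and the contributions of the nontrivial zeros $\rho=1/2+i\gamma$; under GRH every such $\rho$ has $\operatorname{Re}(\rho)=1/2$, so each zero contributes $O(x^{1/2}/|\rho|)$, and summing over zeros using the standard zero-counting bound $N(T,\chi)\ll n\log(q(|T|+2))+\log T$ (the analytic conductor of a Hecke character of $K$ with modulus ${\frak m}$ being of size $q(|T|+2)^n$) yields the claimed $O(n\,x^{1/2}\log(xq))$ after optimizing $T$ (taking $T$ a small power of $x$ suffices). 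The bounds on $\frac{L'}{L}$ on the shifted line, needed to control the horizontal segments and the trivial-zero / Gamma-factor contributions, come from the Hadamard factorization together with GRH and are again $O(n\log(xq))$-type. This is the step I expect to be the main technical obstacle: not because any one estimate is hard, but because one must be scrupulous about uniformity — the implied constants must be absolute (not depending on $K$, ${\frak m}$, or $\chi$), which forces one to use only versions of the functional equation, the zero-counting estimate, and the bounds on $L'/L$ in which the degree $n$ appears explicitly as a linear factor and the conductor contributes only through $\log q$.

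Granting \eqref{keyhecke}, the three displayed assertions follow quickly. Equation \eqref{expthmtriv} is \eqref{keyhecke} for $\chi=\mathbbm 1$ multiplied by $2$ (the factor coming from including both ${\frak p}$ and its inverse in $S_x$); equation \eqref{expthmnontriv} is twice the real part of \eqref{keyhecke} for $\chi\ne\mathbbm 1$. For \eqref{expthmbd}, one first notes from \eqref{expthmtriv} that $\l_{\triv}=2\operatorname{li}(x)+O(n x^{1/2}\log(xq))$, and that the hypotheses $B>2$, $x\ge(\log q)^B$ force the error term to be negligible compared with $\operatorname{li}(x)\asymp x/\log x$ (indeed $x^{1/2}\log(xq)\le x^{1/2}\log(x\cdot x^{1/B}) \ll x^{1/2}\log x \ll \operatorname{li}(x)/x^{1/3}$, say, once $x$ is large in terms of $n$), so $\l_{\triv}\asymp_n x/\log x$ and hence $x\asymp_n \l_{\triv}\log\l_{\triv}$. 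Substituting this into \eqref{expthmnontriv} gives $|\l|\ll_n x^{1/2}\log(xq)$; using $q\le x^{1/B}$ to bound $\log(xq)\le(1+1/B)\log x\ll_B\log x\ll\log(\l_{\triv}\log\l_{\triv})\ll(\l_{\triv}\log\l_{\triv})^{1/B}$ (the last step because a logarithm is dominated by any positive power), and $x^{1/2}\ll_n(\l_{\triv}\log\l_{\triv})^{1/2}$, we obtain $|\l|\ll_{B,n}(\l_{\triv}\log\l_{\triv})^{1/2+1/B}$, as claimed, with the implied constant now depending on $B$ and $n$ because we invoked the lower bound $\l_{\triv}\gg_n x/\log x$ and the inequality $\log z\ll_B z^{1/B}$.
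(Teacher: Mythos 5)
Your reduction and your core estimate are exactly the paper's: the eigenvalues are the character sums of (\ref{abelcayeign}), and everything rests on a GRH-conditional bound for $\sum_{N\mathfrak{p}\le x}\chi(\mathfrak{p})$ (the paper's (\ref{needtoshow})). The only difference in route is that the paper simply quotes the standard estimate (\ref{iwanshows}) from Iwaniec--Kowalski for primitive Hecke characters and then does partial summation, whereas you propose to re-derive that estimate by truncated Perron, contour shift, and GRH zero-counting with explicit conductor/degree dependence --- which is precisely the proof of the quoted result, so this is the same argument written out rather than cited. One small point your sketch leaves implicit and the paper handles explicitly: characters of $G$ need not be primitive, and the passage to the primitive character (equivalently, the Euler factors at the $O(\log q)$ primes dividing $\mathfrak{m}$) contributes only $O(\log q)$, absorbed into the error term.

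There is, however, one concrete slip in your deduction of (\ref{expthmbd}). The hypothesis $x\ge(\log q)^B$ gives $\log q\le x^{1/B}$, \emph{not} $q\le x^{1/B}$: the modulus $q$ may be as large as $e^{x^{1/B}}$. Hence $\log(xq)\le \log x + x^{1/B}\ll_B x^{1/B}$, and it is false in general that $\log(xq)\le(1+\tfrac1B)\log x$. Both places where you used this must be redone: the error term in (\ref{expthmtriv}) is $n\,x^{1/2}\log(xq)\ll_B n\,x^{1/2+1/B}$, which is $o(\operatorname{li}(x))$ precisely because $B>2$ forces $\tfrac12+\tfrac1B<1$; this restores $\l_{\triv}\asymp x/\log x$ (for $x$ large in terms of $n,B$), hence $x\asymp \l_{\triv}\log\l_{\triv}$, and then $|\l|\ll n\,x^{1/2}\log(xq)\ll_B n\,x^{1/2+1/B}\ll_{B,n}(\l_{\triv}\log\l_{\triv})^{1/2+1/B}$ as claimed. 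The slip matters conceptually, not just cosmetically: as written, your chain would yield the much stronger bound $|\l|\ll x^{1/2}\log x$, making the exponent $\tfrac1B$ and the requirement $B>2$ look superfluous, whereas the entire purpose of the exponent $\tfrac12+\tfrac1B$ is to absorb a $\log q$ factor that can genuinely be of size $x^{1/B}$. With this correction, your proposal is a valid proof along the same lines as the paper's.
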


\begin{rem}\label{minkow}
a) The Theorem immediately applies to quotients of narrow ray class groups, such as ray class groups themselves.  This is because the spectrum of the quotient Cayley graph consists of  eigenvalues for those characters which factor through the quotient.

b)
The parameter $q$ should be thought of as large, in light of
Minkowski's theorem that there are only a finite number of number
fields with a given discriminant \cite[p.~121]{lang}.

c) The above
bound on the spectral gap is worse than that for Ramanujan graphs \cite{LPS}, and
thus abelian graphs are not optimal in this sense (see also \cite{alonreich,lubweiss}).  However, one gains explicit constructions that are simpler
computationally; additionally, there are situations where these graphs
occur naturally and the expansion bounds are helpful, as
our following examples show.
\end{rem}

From the abovementioned relationship between expander graphs and rapid
mixing of random walks, we obtain the following application.

\begin{cor}\label{mixingcor}  Fix $B>2$ and $n\ge 1$, and assume the
  same hypotheses of the previous theorem, including the choice of
  $x\ge(\log q)^B$.  Then there exists a positive constant $C$ with the following property:~for $q$ sufficiently large, a random walk of length $$t \ \ \ge \ \
  C\ \f{\log|G|}{\log\log q}$$ from any starting vertex lands in any fixed subset $S \subset G$
  with probability at least $\f{1}{2}\f{|S|}{|G|}$.
\end{cor}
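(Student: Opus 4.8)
The plan is to feed the eigenvalue bound (\ref{expthmbd}) of \thmref{expthm} into the standard $\ell^2$ estimate for random walks underlying \lemref{mixingtime}; the feature to exploit is that (\ref{expthmbd}) forces every normalized nontrivial eigenvalue to be $o(1)$, so that each step of the walk contracts the distance to the uniform distribution by a factor tending to $0$ rather than merely by the fixed factor $1-\d$ of the generic \lemref{mixingtime}, and it is this extra contraction that upgrades the naive mixing time $\asymp\log|G|$ to the stated $\asymp\log|G|/\log\log q$. Set $k=\l_{\triv}$, the common degree of $\G_x=Cay(G,S_x)$; let $\mu=|\l|_{\max}/\l_{\triv}$ be the largest absolute value of a normalized nontrivial eigenvalue of the adjacency operator (so $\mu\to0$ as $q\to\infty$ by (\ref{expthmbd})); and let $M=\f1kA$ be the transition operator of the walk in question, which is symmetric, fixes $\mathbbm{1}$, and has all other eigenvalues at most $\mu$ in absolute value.

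A walk of length $t$ from a vertex $v$ has distribution $p_t=M^t\d_v$. Writing $\d_v=u+w$ with $u=\f1{|G|}\mathbbm{1}$ the uniform distribution and $w\perp\mathbbm{1}$ (so $\|w\|_2\le1$), we get $p_t=u+M^tw$ with $\|M^tw\|_2\le\mu^t$. Hence for any nonempty $S\subset G$,
\begin{equation*}
  p_t(S)\ =\ \f{|S|}{|G|}+\langle M^tw,\mathbbm{1}_S\rangle\ \ge\ \f{|S|}{|G|}-\sqrt{|S|}\,\mu^t
\end{equation*}
by Cauchy--Schwarz (since $\|\mathbbm{1}_S\|_2=\sqrt{|S|}$), so the asserted bound $p_t(S)\ge\f12\f{|S|}{|G|}$ holds as soon as $\sqrt{|S|}\,\mu^t\le\f{|S|}{2|G|}$; and since $|S|\ge1$ it is enough that $\mu^t\le\f1{2|G|}$, i.e.\ that
\begin{equation*}
  t\ \ge\ \f{\log(2|G|)}{\log(1/\mu)}\,.
\end{equation*}

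It remains to bound $\log(1/\mu)$ below by a fixed multiple of $\log\log q$. From (\ref{expthmtriv}): since $x\ge(\log q)^B$ gives $\log(xq)\ll x^{1/B}$, the error term there is $O(n\,x^{1/2+1/B})$, which is $o(\operatorname{li}(x))$ precisely because $B>2$ makes $\tfrac12+\tfrac1B<1$; so for $q$ large $\l_{\triv}\ge\operatorname{li}(x)\gg(\log q)^B/\log\log q$, whence $\log\l_{\triv}\ge(1-o(1))\,B\log\log q$. Rearranging (\ref{expthmbd}) to $\mu\ll_{B,n}(\log\l_{\triv})^{1/2+1/B}\,\l_{\triv}^{1/B-1/2}$ then gives
\begin{equation*}
  \log\tfrac1\mu\ \ge\ \left(\tfrac12-\tfrac1B\right)\log\l_{\triv}-\left(\tfrac12+\tfrac1B\right)\log\log\l_{\triv}-O_{B,n}(1)\ \ge\ \f{B-2}{4}\log\log q
\end{equation*}
for $q$ sufficiently large, the last two terms being $o(\log\log q)$. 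Combining this with $\log(2|G|)\le2\log|G|$ for $|G|\ge2$ (the case $|G|=1$ being trivial, since then $S=G$ and every walk lands in $S$), any $t\ge\f{8}{B-2}\cdot\f{\log|G|}{\log\log q}$ meets the condition of the previous paragraph, so one may take $C=8/(B-2)$, the meaning of ``$q$ sufficiently large'' depending only on $B$ and $n$. There is no essential obstacle; the one step requiring care is this logarithmic bookkeeping, in which the hypothesis $B>2$ is used twice --- to make the main term of $\l_{\triv}$ in (\ref{expthmtriv}) dominate its error term, and to make the exponent $\tfrac12-\tfrac1B$ of $\l_{\triv}$ governing $\mu$ positive, so that $\log(1/\mu)$ genuinely grows like $\log\l_{\triv}\asymp\log\log q$ instead of staying bounded.
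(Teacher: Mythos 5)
Your proposal is correct and follows essentially the same route as the paper: the paper simply invokes \lemref{mixingtime} with $k=\l_{\triv}$ and $c$ the bound (\ref{expthmbd}), and then checks that $\log\f{k}{c}\gg_{B,n}\log\log q$ for large $q$, which is exactly your final bookkeeping step (your use of $B>2$ both to make $\operatorname{li}(x)$ dominate the error in (\ref{expthmtriv}) and to make the exponent $\tfrac12-\tfrac1B$ positive is precisely what the paper's terse ``$\log\l_{\triv}\gg B\log\log q$'' is hiding). The only cosmetic difference is that you re-derive the spectral mixing estimate inline (in a slightly coarser form, dropping the $|S|^{1/2}$ saving) rather than citing \lemref{mixingtime} directly.
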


Let us now illustrate the theorem with a few examples.  The first
example is the field $K=\Q$, whose  narrow ray class groups
are of the form $(\Z/q\Z)^*$, for $q>1$. In this case the edges of the
Cayley graph
connect each vertex $v \in (\Z/q\Z)^*$ to $pv$ and $p\i v$ (mod
$q$), for all primes $p$ such that $p \le (\log q)^{2+\d}$ and $p\nmid
q$. Starting from any $v$ and taking random steps of this form
results in a uniformly distributed random element of $(\Z/q\Z)^*$ in
$O((\log q)/\log \log q)$ steps. The character sum
(\ref{abelcayeign}) for $\l_\chi$ here amounts to the sum $2\, \Re
\sum_{p\le(\log q)^{2+\d}}\chi(p)$, so bounds on $\l_\chi$ yield
statements about the distribution of small primes in residue classes
modulo $q$.   GRH, which is used in (\ref{expthmbd}), is a natural
tool for such problems.  It seems difficult to obtain an
unconditional result along these lines, because the special case
when $\chi$ is a quadratic character modulo $q$ is related to the
problem of estimating the smallest prime quadratic nonresidue modulo
$q$. Finding such a prime is equivalent to obtaining any cancellation
at all in the sum $\sum_{p\le x}(\f{p}{q})$, and even this problem
seems to require a strong hypothesis such as GRH.  However, it is possible to use the Large Sieve to prove unconditional results for typical values of $q$, such as \cite[Theorem 3]{garaev}, which shows that $\f{\l_\chi}{\l_{\triv}}$ goes to zero outside of a sparse subset of moduli $q$.

The next example, when $K$ is an imaginary quadratic number field,
is related to  elliptic curves  over finite fields.  Using the
correspondence between ordinary elliptic curves and ideal classes in
orders of imaginary quadratic number fields, we prove the following theorem.

\begin{defn}
We say that two ordinary elliptic curves $E_1, E_2$ defined over
$\F_q$ have the \emph{same level} if their rings of
 endomorphisms
$\operatorname{End}(E_i)$ are isomorphic.  (In this paper, we follow the
standard convention that $\End(E)$ refers to  $\bar{\F}_q$-endomorphisms.)
\end{defn}

\begin{thm}\label{isographexpands}
  Consider the set $S_{N,q}$ of $\bar{\F}_q$-isomorphism classes of
  ordinary elliptic curves defined over $\F_q$ having $N$
  points. Fix\footnote{\label{footnote2}We will frequently treat the elements of
  $S_{N,q}$ as curves,  though strictly speaking they are
  isomorphism classes of curves.  This distinction does not affect \thmref{ranreduce} because isomorphisms between curves in $S_{N,q}$ can be computed in time polynomial in $\log q$.}
  an $E \in S_{N,q}$ and let $\mathcal V$ be the set of all curves in $S_{N,q}$
  having the same level as $E$. Form a graph on the set of vertices
  $\mathcal V$ by connecting curves $E_1$ and $E_2$ with an edge if there
  exists an isogeny of prime degree less than $(\log 4q)^{B}$
  between them, for some fixed $B>2$. Then, assuming GRH, this graph
  is an expander graph in the sense that its nontrivial eigenvalues
  satisfy the bound (\ref{expthmbd}).
\end{thm}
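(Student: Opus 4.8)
The plan is to reduce this statement to \thmref{expthm} applied to a narrow ray class group attached to an imaginary quadratic order. First I would invoke the classical theory of complex multiplication: since the curves in $\mathcal V$ are ordinary and share the same endomorphism ring $\O$, an order of conductor $\mathfrak{f}$ in the imaginary quadratic field $K=\Q(\sqrt{t^2-4q})$ (where $t=q+1-N$ is the trace of Frobenius), the set $\mathcal V$ is a principal homogeneous space under the ideal class group $\Cl(\O)$. Moreover, an isogeny of prime degree $\ell$ (with $\ell$ coprime to the conductor and split in $\O$) corresponds to the action of one of the two prime ideals above $\ell$, i.e.\ to multiplication by a degree-$\ell$ ideal class. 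Thus the isogeny graph on $\mathcal V$ is, after choosing a base point, exactly the Cayley graph $Cay(\Cl(\O), S)$ where $S$ is the multiset of classes of prime ideals $\mathfrak{p}$ with $N\mathfrak{p} = \ell$ prime, $\ell < (\log 4q)^B$, together with their inverses — precisely the graph $\G_x$ of \thmref{expthm} with $x = (\log 4q)^B$ and $\mathfrak{m}$ chosen so that $\Cl(\O)$ is realized as a quotient of the narrow ray class group of $K$ modulo $\mathfrak{m}$.

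The second step is to make that last identification precise. The ring class group of the order $\O$ of conductor $\mathfrak{f}$ is a quotient of the ray class group of $K$ modulo $\mathfrak{f}\O_K$ (and hence, by part~(a) of \remref{minkow}, the Cayley-graph spectrum for $\Cl(\O)$ is a sub-multiset of the spectrum for the narrow ray class group modulo $\mathfrak{f}\O_K$). So I would set $\mathfrak{m} = \mathfrak{f}\O_K$, giving the conductor-discriminant parameter $q_{\text{ray}} = |D_K| \cdot N\mathfrak{m} = |D_K| \cdot \mathfrak{f}^2 = |\disc \O|$, which divides $t^2 - 4q$ and is therefore $O(q)$; in particular $\log q_{\text{ray}} = O(\log 4q)$, so the hypothesis $x \ge (\log q_{\text{ray}})^B$ is met (up to adjusting constants, which is harmless since the implied constant in \eqref{expthmbd} is allowed to depend on $B$ and on $n=2$). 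A minor point to check here: prime ideals $\mathfrak{p}$ of $K$ whose norm is a prime $\ell$ dividing the conductor, or which are inert/ramified, must be excluded or contribute harmlessly — but these are $O(x^{1/2})$ in number and in any case \thmref{expthm} already accounts for coprimality to $\mathfrak{m}$, so the error is absorbed into \eqref{expthmnontriv}. Applying \thmref{expthm}, and then \remref{minkow}(a) to pass to the quotient $\Cl(\O)$, yields that every nontrivial eigenvalue of the isogeny graph satisfies $|\l| = O\big((\l_{\triv}\log\l_{\triv})^{1/2+1/B}\big)$, which is \eqref{expthmbd}.

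The main obstacle — really the only substantive issue beyond bookkeeping — is verifying that the \emph{combinatorial} isogeny graph on $\mathcal V$ genuinely coincides with the abelian Cayley graph, including edge multiplicities. One must check: (i) that "same level" (isomorphic endomorphism rings) pins down a single order $\O$, so that $\mathcal V$ is a single $\Cl(\O)$-torsor rather than a union of torsors under different class groups; (ii) that the horizontal isogenies of prime degree $\ell$ (those preserving $\O$) are exactly the ones used, and that for split $\ell$ there are exactly two of them, matching an ideal class and its inverse — while inert $\ell$ give no edge and ramified $\ell$ give a single loop/edge, consistent with taking $S_x$ as a multiset; and (iii) that degree-$\ell$ isogenies which are \emph{not} horizontal (ascending or descending the isogeny volcano) do not connect two curves of the same level, so they never appear as edges of our graph. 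Point (iii) is the crux and follows from Kohel's analysis of the structure of $\ell$-isogeny graphs of ordinary curves: an $\ell$-isogeny between two curves with the same endomorphism ring must be horizontal. Once (i)–(iii) are in place the translation to \thmref{expthm} is mechanical, and the eigenvalue bound \eqref{expthmbd} transfers verbatim.
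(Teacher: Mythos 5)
Your proposal is correct and arrives at the same core reduction as the paper: identify the prime-degree isogeny graph on a level with a Cayley graph of $\Cl(\O)$, realize $\Cl(\O)$ as a quotient of the narrow ray class group of $K=\Q(\sqrt{t^2-4q})$ modulo the conductor (so Remark~\ref{minkow}(a) applies), note $|D|\le |t^2-4q|\le 4q$ so $x=(\log 4q)^B$ satisfies the hypothesis of \thmref{expthm}, and transfer the bound \eqref{expthmbd}. The difference is in how the identification of the combinatorial isogeny graph with the Cayley graph is justified. The paper works first over $\C$ (\thmref{complex-expanders}, via Proposition~\ref{isogeny-correspondence}) and then descends to $\F_q$ using Deuring's lifting theorem: surjectivity of reduction comes from lifting, injectivity from the observation that a non-principal ideal $\mathfrak{c}$ would otherwise yield an endomorphism of degree $N(\mathfrak{c})$, impossible in an imaginary quadratic order, and the matching of degree-$\ell$ edges comes from counting roots of the modular polynomial $\Phi_\ell$ in characteristics $0$ and $p$. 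You instead invoke the finite-field CM facts directly -- the $\Cl(\O)$-torsor structure of curves with endomorphism ring $\O$ and Kohel's result that a prime-degree isogeny between curves with the same endomorphism ring is horizontal and corresponds to an invertible ideal of that norm. Both routes deliver the same statement; yours is shorter and makes the split/ramified/inert and conductor-dividing cases (and edge multiplicities) more explicit, but it leans on Kohel's volcano classification as a black box, whose standard proof ultimately rests on the same Deuring lifting machinery the paper spells out. Your bookkeeping of the ray-class parameter ($N\mathfrak{m}=\mathfrak{f}^2$, $q_{\mathrm{ray}}=|D|\le 4q$, $n=2$) is consistent with what the paper does implicitly.
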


Theorem~\ref{isographexpands} has implications for the security of the
elliptic curve discrete logarithm problem.  Recall that the discrete
logarithm problem (\textsc{dlog}) asks to recover the exponent $a$ of
a power $g^a$ of a known element $g$.  Its presumed difficulty serves
as the basis of several cryptosystems, for example the Diffie-Hellman
key exchange.  Though many difficult problems in computer science are
only hard in rare instances, good cryptosystems typically must be
based on problems which are almost always hard. We recall that the
\textsc{dlog} problem on a given group has \emph{ random
self-reducibility}: that means given an algorithm ${\cal A}(g^a)=a$
which solves \textsc{dlog} on, say, half of all input values $y$, we
may easily find a random value of $r$ such that $\cal A$ works on
$y'=g^r y$, and deduce that ${\cal A}(y)={\cal A}(y')-r$. Therefore,
if \textsc{dlog} is hard for some values of $y$, it must be hard for
almost all values.  Though this result says nothing about the absolute
difficulty of the problem, it is a comforting assurance regarding the
relative difficulty of multiple instances of the problem.

Elliptic curve cryptography~\cite{koblitz,miller,blake} is based on
the conjectured difficulty of \textsc{dlog} problems within the group
of points of an elliptic curve over a finite field.  At present,
cryptographers typically select elliptic curves in the following way:
a large finite field $\F_q$ is selected, and an elliptic curve
$E/\F_q$ is generated at random.  Its order $\#E(\F_q)$ is quickly
computed \cite{schoof,sea}, and the curve is discarded unless the
order has a large prime factor (because otherwise \textsc{dlog} is
much easier).  It is also checked from the point count whether or not
$E$ is supersingular or has other weaknesses, and if it is then the curve is
discarded.\footnote{Supersingular curves are thought to be
  cryptographically weaker, because of the existence of subexponential
  attacks on their \textsc{dlog} problems~\cite{mov}.  This is not to
  say that no subexponential attacks exist for ordinary curves; in
  fact, some are known to succeed on a very modest proportion of them
  \cite{ghs,traceattack}, and of course other unknown ones may yet be
  discovered.  The supersingular analog of
  Theorems~\ref{isographexpands} and~\ref{ranreduce} are given in
  \cite[Appendix]{jmv}. }  The above practice efficiently yields
elliptic curves thought to be suitable for cryptographic purposes.  An obvious
question is whether or not other considerations are important,
i.e.~whether the point count is the only factor influencing the
difficulty of \textsc{dlog} on an elliptic curve over a fixed finite
field.

In studying this question, the random self-reducibility fact from
above does not apply, because it pertains only to a single curve, and
says nothing about the comparative difficulty of \textsc{dlog} between
two different curves. However, we can instead use the fact that an
efficiently computable isogeny provides a reduction of the
\textsc{dlog} problems between two curves.  Furthermore, a theorem of
Tate~\cite{tate1} states that all curves of cardinality $N$ defined
over $\F_q$ are isogenous, but unfortunately not all isogenies are
efficiently computable, so the theorem does not immediately imply that
all curves in $S_{N,q}$ have equivalent \textsc{dlog} problems.  On
the other hand, isogenies of low degree are efficiently computable,
and the rapid mixing in Theorem~\ref{isographexpands} says that their
random compositions become uniformly distributed over curves within
each level in $S_{N,q}$.  This property allows us to establish that
the difficulty of the elliptic curve \textsc{dlog} problem is in a
sense uniform over any given level.  More precisely:

\begin{thm}\label{ranreduce}
With the hypotheses of Theorem~\ref{isographexpands}, assume there
is an algorithm $\cal A$ which solves the discrete logarithm problem
on a positive fraction $\mu$ of the elliptic curves in a given
level. There exists an absolute polynomial $p(x)$ such that one can
probabilistically solve the discrete logarithm problem on any curve
in the same level with expected runtime $\f{1}{\mu}p(\log q)$ times the maximal runtime of $\cal A$.
\end{thm}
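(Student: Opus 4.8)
The plan is to combine the equidistribution of low-degree isogeny walks from \thmref{isographexpands} with the standard random self-reducibility of \textsc{dlog} on a single curve. Fix a level, let $\mathcal V$ be the set of curves in it, and let $E_0 \in \mathcal V$ be an arbitrary target curve on which we wish to solve \textsc{dlog}. First I would invoke \thmref{isographexpands} together with \lemref{mixingtime} (equivalently \corref{mixingcor}) to produce a random walk of length $t = O((\log q)/\log\log q)$ in the isogeny graph, starting at $E_0$, whose endpoint $E_1$ is, up to a factor of $\tfrac12$, uniformly distributed over $\mathcal V$. Each step of this walk is an isogeny of prime degree $\ell < (\log 4q)^B$, which can be written down and evaluated on points in time polynomial in $\ell$ and $\log q$ (Vélu's formulas); composing $t$ such steps gives an explicit isogeny $\phi\colon E_0 \to E_1$ computable, along with $\hat\phi$ or an inverse up to the kernel, in time $\mathrm{poly}(\log q)$.

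Next I would show this yields the reduction. Since the algorithm $\cal A$ succeeds on a $\mu$-fraction of $\mathcal V$, the endpoint $E_1$ of the walk lands in the good set with probability at least $\tfrac{\mu}{2}$ by the mixing bound; by repeating the walk $O(1/\mu)$ times (with independent randomness) we may assume $E_1$ is a curve on which $\cal A$ works, except with small failure probability. Given a \textsc{dlog} instance $(P, aP)$ on $E_0$, push it forward through $\phi$ to get $(\phi(P), \phi(aP)) = (\phi(P), a\,\phi(P))$ on $E_1$; here one must be mildly careful that $\phi(P)$ still generates a subgroup in which \textsc{dlog} is meaningful, i.e.\ that $\ell \nmid \mathrm{ord}(P)$ for the $\mathrm{poly}(\log q)$ many primes $\ell$ appearing — since the interesting case is $\mathrm{ord}(P)$ a large prime, this is automatic, and in general one handles the bounded bad part of the order separately by brute force. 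Running $\cal A$ on $E_1$ returns $a$, which is the answer on $E_0$. The expected number of walk-and-test trials is $O(1/\mu)$, each costing $\mathrm{poly}(\log q)$ field operations plus one invocation of $\cal A$; absorbing everything into a single absolute polynomial $p$ gives expected runtime $\tfrac{1}{\mu}p(\log q)$ times the maximal runtime of $\cal A$, as claimed. One should also note that deciding whether a given trial's endpoint is a ``good'' curve is not needed a priori: one simply runs $\cal A$ on $E_1$ and checks the returned value against $aP$, so no membership oracle for the good set is required.

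The main obstacle is the bookkeeping around what ``solves \textsc{dlog} on a curve'' means once we transport the instance: $\cal A$ is assumed to work on a positive fraction of curves, but on each such curve it is only guaranteed to work for, say, a positive fraction of instances $(P,aP)$ — so one must splice in the ordinary single-curve random self-reducibility (randomize $a \mapsto a + r$, or rerandomize the generator) on $E_1$ before calling $\cal A$, and argue the two sources of randomness compose to give overall success probability bounded below by an absolute constant times $\mu$. Making this precise — and confirming that the footnoted subtlety about $S_{N,q}$ consisting of isomorphism classes rather than curves costs only $\mathrm{poly}(\log q)$, as asserted in footnote~\ref{footnote2} — is the only genuinely delicate point; everything else is a routine assembly of the mixing bound, Vélu's formulas, and self-reducibility.
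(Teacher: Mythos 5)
Your proposal is correct and follows essentially the same route as the paper: a random walk of length $O((\log q)/\log\log q)$ in the isogeny expander of \thmref{isographexpands}, the mixing bound of Corollary~\ref{mixingcor} giving landing probability at least $\mu/2$ in the good set, the composed low-degree isogeny giving a polynomial-time transport of the \textsc{dlog} instance, and repetition yielding an expected $O(1/\mu)$ queries to $\cal A$. Your additional care about the kernel/order coprimality, answer verification, and instance-level self-reducibility goes beyond what the paper spells out but does not change the argument.
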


In practice, the level restriction in Theorem~\ref{ranreduce} is
actually irrelevant. Indeed, if two curves in $S_{N,q}$ are not of the
same level, then their levels must differ at either a small prime or a
large prime.\footnote{There is possibly an intermediate range, though its existence is somewhat fluid depending on hardware and software developments (see \secref{subsec:gaps}).} In the small prime case, we can still obtain
\textsc{dlog} reductions using low degree isogenies
(cf. \secref{sec:levels}), and in the large prime case, no constructible
examples of such pairs of curves are known. Several interesting
theoretical questions remain concerning the large prime case and the
true value of the isogeny degrees needed to achieve expansion. We
describe some open problems in \secref{problemsec}.

\section{Expander Graphs}\label{expansec}

In this section we recall a standard bound for the mixing time of a
random walk on an expander graph, discuss the lack of nontrivial
short cycles on the GRH graphs, and prove  \thmref{expthm} and
Corollary~\ref{mixingcor}.  We keep the notation and definitions of
the introduction.

\begin{lem}\label{mixingtime}
Let $\G$ be a finite $k$-regular graph for which the nontrivial
eigenvalues $\l$  of the adjacency matrix $A$ are bounded by $|\l|
\le c$, for some $c<k$. Let $S$ be any subset of the vertices of
$\G$, and $v$ be any vertex in $\G$. Then a random walk of any
length at least $\f{\log 2|\G|/|S|^{1/2}}{ \log k/c}$ starting from
$v$ will end in $S$ with probability between  $ \f{1}{2}\f{|S|}{
|\G|}$ and $\f{3}{2}\f{|S|}{ |\G|}$.
\end{lem}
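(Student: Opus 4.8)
The plan is to run the standard spectral argument for the normalized adjacency (transition) operator $M=\frac1k A$. Since $\G$ is $k$-regular and undirected, with any multiple edges counted with multiplicity, $A$ is a real symmetric operator on $L^2(\mathcal V)$, so it has an orthonormal eigenbasis $e_1=|\G|^{-1/2}\mathbbm 1,\,e_2,\dots,e_{|\G|}$ with real eigenvalues $k=\l_1\ge\l_2\ge\cdots$. The hypothesis $|\l|\le c<k$ for every nontrivial $\l$ guarantees that $\l_{\triv}=k$ is a simple eigenvalue (so $\G$ is connected) and that $-k$ is not an eigenvalue (so $\G$ is non-bipartite); this is exactly what makes the walk converge. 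One step of the random walk multiplies a distribution by $M$ (using that $A$ is symmetric and $\G$ is $k$-regular), so the distribution after $t$ steps from $v$ is $M^t\delta_v$, and the probability of landing in $S$ is $\langle M^t\delta_v,\mathbbm 1_S\rangle$ in the inner product $\langle f,g\rangle=\sum_x f(x)g(x)$.

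Next I would expand the point mass and the indicator in the eigenbasis,
\[
\delta_v \ = \ \tfrac1{|\G|}\mathbbm 1 + \sum_{i\ge2} a_i e_i\,, \qquad
\mathbbm 1_S \ = \ \tfrac{|S|}{|\G|}\mathbbm 1 + \sum_{i\ge2} b_i e_i\,,
\]
where $a_i=\langle\delta_v,e_i\rangle$, $b_i=\langle\mathbbm 1_S,e_i\rangle$, and the $e_1$-coefficients are computed directly. Applying $M^t$ and pairing gives
\[
\langle M^t\delta_v,\mathbbm 1_S\rangle \ = \ \frac{|S|}{|\G|} \ + \ \sum_{i\ge2}\Big(\frac{\l_i}{k}\Big)^{\!t} a_i b_i\,,
\]
and the error term is controlled by Cauchy--Schwarz together with $|\l_i|\le c$:
\[
\Big|\sum_{i\ge2}\Big(\tfrac{\l_i}{k}\Big)^{\!t}a_ib_i\Big|
 \ \le \ \Big(\tfrac ck\Big)^{\!t}\Big(\sum_{i\ge2}a_i^2\Big)^{1/2}\Big(\sum_{i\ge2}b_i^2\Big)^{1/2}
 \ \le \ \Big(\tfrac ck\Big)^{\!t}\,\|\delta_v\|\,\|\mathbbm 1_S\|
 \ = \ \Big(\tfrac ck\Big)^{\!t}|S|^{1/2}\,,
\]
using $\|\delta_v\|=1$ and $\|\mathbbm 1_S\|=|S|^{1/2}$ (the sharper bound $\sum_{i\ge2}b_i^2=|S|-|S|^2/|\G|$ is available but unnecessary).

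Finally, the stated threshold on $t$ is precisely the condition that makes this error at most $\tfrac12\tfrac{|S|}{|\G|}$: the inequality $t\ge \log\!\big(2|\G|/|S|^{1/2}\big)/\log(k/c)$ is equivalent to $(k/c)^t\ge 2|\G|/|S|^{1/2}$, i.e.\ to $(c/k)^t|S|^{1/2}\le \tfrac12\tfrac{|S|}{|\G|}$, and since $(c/k)^t$ decreases in $t$ the same bound holds for all larger lengths. Hence the landing probability differs from $\tfrac{|S|}{|\G|}$ by at most $\tfrac12\tfrac{|S|}{|\G|}$, so it lies between $\tfrac12\tfrac{|S|}{|\G|}$ and $\tfrac32\tfrac{|S|}{|\G|}$, as claimed. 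There is no genuine obstacle here; the only points needing care are verifying that $M$ is self-adjoint with $k$ a simple eigenvalue (so the spectral picture is as stated) and the bookkeeping that makes the constants $\tfrac12$ and $\tfrac32$ come out exactly from the chosen cutoff.
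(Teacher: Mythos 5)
Your proposal is correct and is essentially the paper's own argument: the paper splits $\langle \chi_S, A^t\chi_{\{v\}}\rangle$ into the main term $\f{|S|}{|\G|}k^t$ plus a term through the projection $P$ onto the orthogonal complement of constants, bounds the latter by $c^t\|\chi_S\|\,\|\chi_{\{v\}}\| = c^t|S|^{1/2}$ via the operator norm and Cauchy--Schwarz, and checks that the stated cutoff on $t$ makes this at most half the main term. Your eigenbasis expansion of $\delta_v$ and $\mathbbm{1}_S$ for the normalized operator $M=A/k$ is just a coordinate version of the same decomposition, so there is nothing further to add.
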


\noindent Of course the probability range can be significantly
narrowed by lengthening the walk, as it turns out even by a slight
amount.

\begin{proof}
Letting $\chi_S$ and $\chi_{\{v\}}$ denote the characteristic
functions of the sets $S$ and $\{v\}$, respectively, the number of
paths of length $t$ which start at $v$ and end in $S$ is given by
the $L^2$-inner product $\langle \chi_S,A^t \chi_{\{v\}}\rangle$.
Let $P$ denote the projection from $L^2(\G)$ onto the orthogonal
complement of the constant functions; the operator $A$ preserves  this space and its operator
norm on it is bounded by $c$ because of our eigenvalue assumption.  Then
\begin{equation}\label{innerproductcalc}
\langle \chi_S,A^t \chi_{\{v\}}\rangle \ \ = \ \ \f{|S|}{|\G|}k^t \
+ \ \langle   P\chi_S, A^tP\chi_{\{v\}}    \rangle\,.
\end{equation}
The latter term is bounded by
\begin{equation}
\aligned
\label{secondinnerproductbd}
    \left|  \langle   P\chi_S, A^tP\chi_{\{v\}}    \rangle \right|
     \ \ \le \ \ \|P\chi_S\|\,\|A^tP\chi_{\{v\}}\| \ \ \le
     \ \ c^t \|P\chi_S\|\,\|P\chi_{\{v\}}\| \\
      \ \ \le \ \ c^t \|\chi_S\|\,\|\chi_{\{v\}}\|
      \ \ = \ \ c^t\,|S|^{1/2}\,.
\endaligned
\end{equation}
For $t\ge \f{\log 2|\G|/|S|^{1/2}}{ \log k/c}$ this is at most half
the size of the main term $k^t|S|/|\G|$ from
(\ref{innerproductcalc}), as was to be shown.
\end{proof}

Next we come to the topic of \emph{girth}, the length of the shortest
 closed cycle on the graph.  Graphs with large girth are
important in many applications, for example to the design of
collision resistant hash functions and stream ciphers (see, for
example, \cite{goldreich,horwen,mv3}). The girth of a $k$-regular
graph cannot be larger than $2\log_{k-1}|\G|$. This inequality comes
from counting the number of points $b(r)$  in a ball of radius $r$
in a $k$-regular tree; a graph with girth $\g$ satisfies the
inequality $b(\g)\le |\G|$, which gives an upper bound on $\g$.
Random graphs tend to have small girth, but one can use
probabilistic methods to show the existence of graphs having girth
at least $(1+o(1))\log_{k-1}|\G|$, i.e.~roughly half the optimal
size. The LPS Ramanujan graphs have the largest known girths:
$(4/3+o(1))\log_{k-1}|\G|$ \cite{LPS,sharplps}.  It is an open
question as to how large the girth can be.

 Abelian Cayley
graphs cannot have large girth because they have many short cycles
of the form $xyx\i y\i$.  To rule out these, one can speak of the
\emph{nonabelian girth}, which is the shortest cycle not having steps
both of the form $x^a$ and $x^{-b}$ for $a,b>0$ and $x\in S$. We
remark that the graphs on $(\Z/q\Z)^*$ described just after
Theorem~\ref{expthm} have
\begin{equation}\label{nonabelgirth}
    \text{nonabelian girth of~}\G \ \ge \ (1+o(1))\log_{k-1}|\G|\,.
\end{equation}
 Indeed, a cycle amounts to two products of
small primes which are equal modulo $q$; by unique factorization, at
least one of these products must be larger than $q$, which gives a
lower bound on the number of factors.  This argument also gives the
same lower bound for the \emph{odd girth} of $\G$ (i.e.~the shortest
closed cycle of odd length), which again is relatively large.  It
should be noted, however, that this is not optimal; in
fact
 there are code-based constructions \cite{alontough,alonreich} which have nonabelian girth
at least $(2+o(1))\log_{k-1}|\G|$.   The reason for mentioning this,
though, is that  explicit examples of graphs with large nonabelian
girth are important for cryptographic applications.

We conclude this section with the proofs of \thmref{expthm} and
Corollary~\ref{mixingcor}.

\begin{proof}[Proof of \thmref{expthm}]
 We explained in (\ref{abelcayeign}) and in the
remarks following it that (\ref{expthmtriv}) and
(\ref{expthmnontriv}) follow from the following estimates for sums
of characters $\chi$ of $G$:
\begin{equation}\label{needtoshow}
\aligned
  \sum_{N{\frak p}\,\le \,x\text{~prime}}\(\chi({\frak p})
  +\chi({\frak p})\i\) \ \ & =
   \ \  2\,\Re\,\sum_{N{\frak p}\,\le \,x\text{~prime}}\chi({\frak p}) \ \ \\
   & = \ \
     2\,r\,\operatorname{li}(x) \ + \   O\(n\,x^{1/2}\log(xq)\)\,,
     \endaligned
\end{equation}
with an absolute implied constant.  Here $r=1$ if $\chi$ is the
trivial character, and 0 otherwise.  Of course,  $\chi$ may be
viewed as a Hecke Grossencharacter on $I_{\mathfrak{m}}$ which is trivial
on $P_{\mathfrak{m}}^+$. Hecke proved that its $L$-function
\begin{equation}\label{lschi}
    L(s,\chi) \ \ = \ \ \sum_{{\frak a}\text{ integral ideal}} \chi({\frak a})\,(N{\frak
    a})^{-s} \ \ = \ \ \prod_{{\frak p}\text{ prime ideal}} (1\,-\,\chi({\frak p})\,(N{\frak
    p})^{-s})^{-1}
\end{equation}
 analytically continues to a holomorphic function on $\C-\{1\}$ of order 1, with at most a simple pole at $s=1$ which occurs only
when $\chi$ is the trivial character.  Furthermore, he also
established a standard functional equation for its
completed $L$-function, which is a product of $L(s,\chi)$,
$\G$-factors of the form $\G(\f s2)$, $\G(\f{s+1}{2})$, and $\G(s)$,
and a power $Q^{s/2}$ of some integer $Q>0$
\cite[p.~211]{iwaniec-blue}.
 The value
of $Q$ varies with different characters, but is always bounded above
by $q=D\cdot N\mathfrak{m}$.  The Dirichlet series coefficients of $L(s,\chi)$, like those of any Artin $L$-function, satisfy the Ramanujan-Petersson conjecture.

Using these analytic properties, along with the assumption of GRH,
one can derive the following standard estimate (which is found in \cite[p.~114]{iwaniec}):
\begin{equation}\label{iwanshows}
\sum_{N{\frak p}\,\le \,x\text{~prime}} \chi({\frak p})\log(N{\frak
p}) \ \ = \ \
     r\,x \ + \   O\(n\,x^{1/2}\log(x)\log(xQ)\)
\end{equation}
for primitive characters $\chi$, again with an absolute implied
constant.  If $\chi$ is imprimitive, one must also include terms for
prime ideals $\frak p$ dividing $\mathfrak{m}$.  There are at most
$O(\log N{\mathfrak{m}})=O(\log q)$ of these, so both their contribution
and the existing error term in (\ref{iwanshows}) can be safely
absorbed into the enlargened error term
$O(nx^{1/2}\log(x)\log(xq))$.
  This variant of (\ref{iwanshows}) in turn implies (\ref{needtoshow}) by
a simple application of partial summation.
\end{proof}

\begin{proof}[Proof of Corollary~\ref{mixingcor}]
The proof follows from \lemref{mixingtime} once we have verified
that $\log \f{k}{c}$ is bounded below by a  constant (depending on $B$
and $n$) times $\log\log q$ once $q$ is sufficiently large. Indeed, in our setting
 the degree is
$k=\l_{\triv}$, and $c$ may be taken to be the bound in
(\ref{expthmbd}).  For $q$ sufficiently large, $\log \f{k}{c}$ is indeed
bounded below by a constant times $\log \l_{\triv} \gg B \log\log q$.
\end{proof}

\begin{rem}
The main point of the Corollary is to give examples of rapid mixing
over large graphs.  However, for a finite number of cases when $q$ is small, the graph $\G_x$ may
actually be disconnected.  In addition, the equidistribution is not as interesting in situations when the graph $\G_x$ has  relatively few vertices, i.e.~when the narrow ray class number
of $\mathfrak{m}$ is small. This can be computed explicitly as
\begin{equation}\label{rayclassnumber}
    |G| \ \ = \ \ |\G_x| \ \ = \ \ 2^{r_1}\,\f{h(K)\,|({\cal O}_K/{\frak
    m})^*|}{[U(K):U_{\mathfrak{m}}(K)]}\, ,
\end{equation}
where $r_1$ is the number of real embeddings of $K$, $h(K)$ its class number, ${\cal O}_K$ its ring of
integers, $U(K)$ its unit group, and $U_{\mathfrak{m}}(K)\subset U(K)$ its subgroup
of totally positive units which are congruent to 1 $(mod~ {\frak
m})$ \cite[Prop. 3.2.4]{cohen}.  For a fixed degree $n$, the class number $h(K)$ is $O_\e(|D|^{1/2+\e})$ for any $\e>0$, and so $|G|$ above is bounded by $O(q)$.
\end{rem}

\section{Elliptic curves}\label{sec:ec}

In this section we explain the connection between the GRH graphs and
elliptic curves, and prove Theorem~\ref{isographexpands}. For ease of presentation, we begin first with the
case of elliptic curves defined over complex numbers, and then later
explain how our results over complex numbers imply the corresponding
results over finite fields.

Let $\O_D$ be an imaginary quadratic order of discriminant $D <
0$. Denote by $\Ell(\O_D)$ the set of all isomorphism classes of
elliptic curves $E$ over $\C$ having $\O_D$ as their full ring of
complex multiplication (i.e. having $\End(E) \iso \O_D$). It is well
known that isomorphism classes of elliptic curves over $\C$ correspond
bijectively with homothety classes of complex
lattices~\cite[I.1]{silverman2}; accordingly, we will write
$E_\Lambda$ throughout for the elliptic curve corresponding to a
complex lattice $\Lambda \subset \C$. Moreover, fixing an embedding
$\O_D \subset \C$, one can show that ideal classes $\mathfrak{a}
\subset \O_D$ give rise to precisely those
lattices representing elliptic curves in
$\Ell(\O_D)$~\cite[10.20]{cox}, and that the map $\mathfrak{a} \mapsto
E_\mathfrak{a}$ induces a bijection between the ideal class group
$\Cl(\O_D)$ of $\O_D$ and $\Ell(\O_D)$.

The above paragraph thus explains the correspondence between ideal
class groups and elliptic curves over $\C$. The following proposition
describes how this correspondence behaves with respect to isogenies:

\begin{prop}\label{isogeny-correspondence}
~
\newline

\vspace{-.7cm}

\begin{enumerate}
\item\label{transitive-action} There is a well defined simply
  transitive action of $\Cl(\O_D)$ on
$\Ell(\O_D)$, given by the formula
$$
\mathfrak{a} * E_\Lambda := E_{\mathfrak{a}^{-1} \Lambda},
$$
valid for any nonzero fractional ideal $\mathfrak{a} \subset
\O_D$.
\item\label{isogeny-existence} If $\mathfrak{a}$ is an invertible
  ideal of $\O_D$, one has $\Lambda \subset \mathfrak{a}^{-1}\Lambda$, and this inclusion induces an
  isogeny $E_\Lambda \to \mathfrak{a} * E_\Lambda$ of degree
  equal to the norm $N(\mathfrak{a})$ of the ideal $\mathfrak{a}$.
\item\label{isogeny-uniqueness} Up to isomorphism, every isogeny
  between two elliptic curves $E_1, E_2 \in \Ell(\O_D)$ arises in
  the above manner.
\end{enumerate}
\end{prop}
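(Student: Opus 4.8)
The plan is to prove the three parts in sequence, since each one builds on the previous. I would begin with part~(\ref{isogeny-existence}), which is the computational heart, and then deduce part~(\ref{transitive-action}) and finally part~(\ref{isogeny-uniqueness}) by a counting/degree argument.

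For part~(\ref{isogeny-existence}): given an invertible ideal $\mathfrak{a} \subset \O_D$ and a lattice $\Lambda$ with $\End(E_\Lambda) \iso \O_D$, the key observation is that $\mathfrak{a}^{-1}$ contains $\O_D$ (as a fractional ideal, $\mathfrak{a}^{-1} = \{x \in K : x\mathfrak{a} \subset \O_D\} \supset \O_D$ since $\mathfrak{a} \subset \O_D$), and hence $\mathfrak{a}^{-1}\Lambda \supset \O_D \Lambda = \Lambda$. The inclusion of lattices $\Lambda \hookrightarrow \mathfrak{a}^{-1}\Lambda$ induces a surjection of complex tori $\C/\Lambda \to \C/(\mathfrak{a}^{-1}\Lambda)$ with finite kernel $\mathfrak{a}^{-1}\Lambda / \Lambda$, i.e.\ an isogeny $E_\Lambda \to E_{\mathfrak{a}^{-1}\Lambda} = \mathfrak{a} * E_\Lambda$. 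The degree is the order of this kernel group, and I would compute $|\mathfrak{a}^{-1}\Lambda/\Lambda| = N(\mathfrak{a})$ by the standard index computation: for invertible $\mathfrak{a}$, $[\O_D : \mathfrak{a}] = N(\mathfrak{a})$, and multiplying the chain $\mathfrak{a} \subset \O_D \subset \mathfrak{a}^{-1}$ by $\Lambda$ (a rank-two $\O_D$-module) preserves these indices. One should check that $\mathfrak{a}^{-1}\Lambda$ still has $\O_D$ as its multiplier ring so that $E_{\mathfrak{a}^{-1}\Lambda} \in \Ell(\O_D)$ — this uses invertibility of $\mathfrak{a}$.

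For part~(\ref{transitive-action}): well-definedness on homothety classes is immediate since scaling $\Lambda$ by $\mu \in \C^*$ scales $\mathfrak{a}^{-1}\Lambda$ by the same $\mu$, and replacing $\mathfrak{a}$ by a homothetic (principal-multiple) ideal changes $\mathfrak{a}^{-1}\Lambda$ only by homothety; the action axiom $(\mathfrak{a}\mathfrak{b}) * E = \mathfrak{a} * (\mathfrak{b} * E)$ follows from $(\mathfrak{a}\mathfrak{b})^{-1}\Lambda = \mathfrak{a}^{-1}(\mathfrak{b}^{-1}\Lambda)$ for invertible ideals. Simple transitivity is the content already quoted from \cite[10.20]{cox}: every class in $\Ell(\O_D)$ is represented by an (invertible) ideal, and two ideals give homothetic lattices iff they differ by a principal ideal, i.e.\ iff they are equal in $\Cl(\O_D)$; so the orbit map $\mathfrak{a} \mapsto \mathfrak{a}^{-1} * E_{\O_D}$ is the inverse of the stated bijection $\Cl(\O_D) \to \Ell(\O_D)$.

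For part~(\ref{isogeny-uniqueness}): let $\varphi : E_1 \to E_2$ be any isogeny with $E_i = E_{\Lambda_i} \in \Ell(\O_D)$. Lifting $\varphi$ to the universal cover $\C$, it is multiplication by some $\alpha \in \C^*$ with $\alpha \Lambda_1 \subset \Lambda_2$; after rescaling we may take $\alpha = 1$, so $\Lambda_1 \subset \Lambda_2$ with the same multiplier ring $\O_D$. Set $\mathfrak{a} := \{x \in K : x\Lambda_2 \subset \Lambda_1\}$; one checks $\mathfrak{a}$ is an invertible $\O_D$-ideal (invertibility because $\Lambda_1, \Lambda_2$ are proper $\O_D$-modules, hence locally principal), that $\mathfrak{a}\Lambda_2 = \Lambda_1$ hence $\mathfrak{a}^{-1}\Lambda_1 = \Lambda_2$, so $E_2 = \mathfrak{a} * E_1$ and $\varphi$ is (up to isomorphism) the isogeny of part~(\ref{isogeny-existence}). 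I expect the main obstacle to be this last step: verifying that the conductor ideal $\mathfrak{a}$ is genuinely \emph{invertible} in the possibly-non-maximal order $\O_D$ — this is where properness of the lattices as $\O_D$-modules is essential and where the subtleties of non-Dedekind orders enter — together with the bookkeeping that $\mathfrak{a}^{-1}\Lambda_1$ (not $\mathfrak{a}\Lambda_1$) is the correct module, which requires care with the direction of inclusions.
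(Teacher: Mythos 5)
Your proposal is correct, and it differs from the paper mainly in how much it proves from scratch. The paper disposes of items (1) and (2) entirely by citation (\cite{silverman2}*{II.1} for the maximal-order case and \cite{langellipticfunctions} in general) and only proves item (3), as follows: choose nested fractional ideals $\mathfrak{a}\subset\mathfrak{b}$ representing $E_1$ and $E_1/\ker(\phi)\iso E_2$, factor $\mathfrak{a}=\mathfrak{b}\mathfrak{c}$ with $\mathfrak{c}\subset\O_D$ integral, and observe that the induced isogeny $E_{\mathfrak{a}}\to\mathfrak{c}*E_{\mathfrak{a}}$ has the same kernel as $\phi$ and hence is isomorphic to it. Your item (3) is the same mechanism in lattice language: lifting $\phi$ to multiplication by $\alpha$, rescaling so that $\Lambda_1\subset\Lambda_2$, and recovering the ideal as the transporter $\{x\in K: x\Lambda_2\subset\Lambda_1\}=\Lambda_1\Lambda_2^{-1}$ plays exactly the role of the paper's $\mathfrak{c}$. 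In both versions the crux is the point you flag: the lattices of curves in $\Ell(\O_D)$ are proper $\O_D$-modules, and for quadratic orders proper implies invertible; this is what makes the transporter equal to $\Lambda_1\Lambda_2^{-1}$ in your argument, and what legitimizes the factorization $\mathfrak{a}=\mathfrak{b}\mathfrak{c}$ with $\mathfrak{c}$ integral in the paper's. Your direct treatment of (1) and (2) buys self-containedness at the cost of verifying the standard index fact $[\mathfrak{a}^{-1}\Lambda:\Lambda]=[\O_D:\mathfrak{a}]$ for invertible $\mathfrak{a}$ and proper $\Lambda$, which you only sketch (and note that $\Lambda$ is rank two over $\Z$ but rank one, i.e. invertible, as an $\O_D$-module --- properness is again the needed hypothesis); the paper's citations buy brevity. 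There is no gap of substance.
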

\begin{proof}
Items~\ref{transitive-action} and~\ref{isogeny-existence} are proved
in~\cite[II.1]{silverman2} (for the case of $\O_D$ maximal)
and~\cite{langellipticfunctions} (for the general case).

To prove item~\ref{isogeny-uniqueness}, let $\phi\colon E_1 \to E_2$
be an isogeny and choose fractional ideals $\mathfrak{a} \subset
\mathfrak{b}$ of $\O_D$ such that $E_\mathfrak{a}\iso E_1$ and
$E_\mathfrak{b} \iso E_1/\ker(\phi) \iso E_2$. Since $\mathfrak{a}
\subset \mathfrak{b}$, there exists an integral ideal $\mathfrak{c}
\subset \O_D$ such that $\mathfrak{b} \mathfrak{c} = \mathfrak{a}$,
whereupon the morphism $\psi\colon E_\mathfrak{a} \to \mathfrak{c} *
E_\mathfrak{a}$ yields an isogeny which has the same kernel as $\phi$,
and hence must be isomorphic to $\phi$.
\end{proof}

We now state and prove an analogue of Theorem~\ref{isographexpands}
over the complex numbers.

\begin{thm}\label{complex-expanders}
Let $\Gamma$ be the graph whose vertices are elements of $\Ell(\O_D)$
and whose edges are isogenies of prime degree less than some fixed
bound $M \geq (\log|D|)^{B}$, for some absolute constant $B> 2$. Then, assuming
GRH, the graph $\Gamma$ is an expander graph satisfying the bound (\ref{expthmbd}).
\end{thm}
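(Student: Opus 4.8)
The plan is to realize $\Gamma$ as a constant multiple of a Cayley graph of the class group $\Cl(\O_{D})$ and then invoke \thmref{expthm} through Remark~\ref{minkow}(a). First I would set up the class–group picture. Write $D=f^{2}D_{K}$, where $D_{K}$ is the fundamental discriminant of $K:=\Q(\sqrt{D})$ and $f=[\O_{K}:\O_{D}]$ is the conductor of $\O_{D}$, and put $\mathfrak m=f\O_{K}$, so that $N\mathfrak m=f^{2}$. Since $K$ is imaginary quadratic it has no real places, so the narrow ray class group of $K$ modulo $\mathfrak m$ coincides with the ordinary ray class group $I_{\mathfrak m}/P_{\mathfrak m}$; and the standard description of the class group of a non-maximal order (e.g.~\cite[\S7]{cox}) exhibits $\Cl(\O_{D})$ as the quotient of $I_{\mathfrak m}/P_{\mathfrak m}$ by the image of those principal $\O_{K}$-ideals generated by an element congruent modulo $\mathfrak m$ to a rational integer prime to $f$. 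Hence $G:=\Cl(\O_{D})$ is a quotient of the narrow ray class group of $K$ modulo $\mathfrak m$, its characters (the ring class characters) are precisely the characters of that group which factor through $G$, and in the notation of \thmref{expthm} one has $n=2$ and $q=|D_{K}|\cdot N\mathfrak m=|D_{K}|f^{2}=|D|$.

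Next I would make the graph-theoretic identification using \propref{isogeny-correspondence}. Fix a base curve $E_{0}\in\Ell(\O_{D})$; by item~(\ref{transitive-action}) the map $\mathfrak a\mapsto\mathfrak a*E_{0}$ is a bijection $G\to\Ell(\O_{D})$. Every edge of $\Gamma$ joins two curves of $\Ell(\O_{D})$, hence is a \emph{horizontal} isogeny (the endomorphism ring is exactly $\O_{D}$ at both ends); by the structure theory of $\ell$-isogeny graphs of CM elliptic curves (isogeny volcanoes), a horizontal isogeny of prime degree $\ell$ between curves with CM by $\O_{D}$ can occur only when $\ell\nmid f$, and for such $\ell$ it is, up to isomorphism, induced by a prime ideal $\mathfrak p\mid\ell$ of $\O_{K}$ (automatically invertible, of norm $\ell$) exactly as in items~(\ref{isogeny-existence}) and~(\ref{isogeny-uniqueness}). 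Translating through the bijection, the degree-$\ell$ isogenies out of $\mathfrak a*E_{0}$ lead precisely to $[\mathfrak p]\mathfrak a*E_{0}$ and $[\overline{\mathfrak p}]\mathfrak a*E_{0}=[\mathfrak p]^{-1}\mathfrak a*E_{0}$ when $\ell$ splits, and to a single curve when $\ell$ ramifies. A short count of multiplicities then shows that the multiset $S_{M}$ of \thmref{expthm} (images and inverses of the prime ideals of $\O_{K}$ coprime to $\mathfrak m$ of prime norm at most $M$) is exactly twice the generating multiset of $\Gamma$, so that the adjacency operator of $\Gamma$ is precisely $\tfrac12$ that of $Cay(G,S_{M})$.

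Finally I would invoke \thmref{expthm} together with Remark~\ref{minkow}(a) with $K$, $\mathfrak m=f\O_{K}$, $n=2$, $q=|D|$, and $x=M$: since $B>2$ and $M\ge(\log|D|)^{B}=(\log q)^{B}$, the graph $Cay(G,S_{M})$ has $\l_{\triv}=2\operatorname{li}(M)+O(M^{1/2}\log(Mq))$ and nontrivial eigenvalues of size $O(M^{1/2}\log(Mq))$, hence satisfies the bound (\ref{expthmbd}) with implied constant depending only on $B$ (and $n=2$); rescaling the adjacency operator by $\tfrac12$ multiplies $\l_{\triv}$ and every nontrivial eigenvalue by the same constant and so preserves (\ref{expthmbd}), which gives the claim for $\Gamma$. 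All of the genuine analytic content (GRH and the Hecke $L$-function estimate) is already packaged inside \thmref{expthm}, so the work that remains lies in the two reductions: identifying $\Cl(\O_{D})$ with the correct quotient of a narrow ray class group and confirming that its conductor parameter is $q=|D|$, and pinning down the exact relation between $\Gamma$ and $Cay(G,S_{M})$ — most delicately the fact that isogenies internal to $\Ell(\O_{D})$ automatically avoid the primes dividing the conductor. That last point is where I would expect to have to be most careful.
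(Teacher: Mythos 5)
Your proposal is correct and follows essentially the same route as the paper: identify $\Gamma$ with (a constant rescaling of) the Cayley graph of $\Cl(\O_D)$ generated by classes of prime-norm ideals, realize $\Cl(\O_D)$ as a quotient of the narrow ray class group of $\Q(\sqrt{D})$ modulo the conductor ideal (so that $q=|D|$), and apply \thmref{expthm} with $x=M$ via Remark~\ref{minkow}(a). The additional care you take with the factor-of-two multiplicity and with horizontal isogenies avoiding primes dividing the conductor simply makes explicit points the paper's proof passes over quickly.
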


\begin{proof}
We have already seen that the elements of $\Ell(\O_D)$ are in
bijection with the elements of the group
$\Cl(\O_D)$~\cite[10.20]{cox}, and that the action of $\Cl(\O_D)$ on
$\Ell(\O_D)$ defined in Proposition~\ref{isogeny-correspondence}
coincides exactly with the translation action of $\Cl(\O_D)$ on
itself under this bijection. Moreover, isogenies of prime degree less
than $M$ correspond to integral ideals of prime
norm less than $M$, and the inverses (i.e. complex
conjugates) of these ideals have the same prime norm and thus also
yield such isogenies. Hence, the graph $\Gamma$ is
isomorphic to the Cayley graph of $\Cl(\O_D)$ under the generating set
consisting of ideals of prime norm less than $M \geq (\log|D|)^{B}$.

Next we relate this graph to one covered by Theorem~\ref{expthm}.  Let $K=\Q(\sqrt{D})$ and $\frak m$ the principal ideal generated by the conductor $c$ of the discriminant $D$ (i.e.~the largest integer whose square divides $D$).  Then the class group $\Cl(\O_D)$ is a quotient of the narrow ray class group of $K$ relative to $\frak m$ \cite[Prop. 7.22]{cox}, and Theorem~\ref{expthm} applies directly to $\Gamma$ and
equation~\eqref{expthmbd} with $x = M$ gives the desired bound.
\end{proof}

In order to prove Theorem~\ref{isographexpands} from
Theorem~\ref{complex-expanders}, we require the following classical
result, known as Deuring's lifting theorem~\cite{deuring}:

\begin{thm}\label{deuring}
~
\newline

\vspace{-.7cm}
\begin{enumerate}
\item  Let $E$ be an elliptic curve defined over $\F_q$, and let $\phi$ be
  a nontrivial endomorphism of $E$. There exists an elliptic curve
  $\tilde{E}$ defined over a number field $L$, a prime ideal
  $\mathfrak{p}$ of $L$, and an endomorphism $\tilde{\phi}$ of
  $\tilde{E}$ such that $\tilde{E}$ and $\tilde{\phi}$ reduce to $E$
  and $\phi$ modulo $\mathfrak{p}$.
\item When $E$ is ordinary, the mod $\mathfrak{p}$ reduction map induces
an isomorphism $\End(\tilde{E}) \iso \End(E)$.
\end{enumerate}
\end{thm}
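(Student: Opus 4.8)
The plan is to deduce Deuring's lifting theorem from the theory of complex multiplication together with the potential-good-reduction behaviour of CM elliptic curves, treating the ordinary and supersingular cases separately; part~(2) concerns only the ordinary case and is in fact the crux. Write $p$ for the characteristic of $\F_q$. If $E$ is ordinary then $\End(E)$ is an order $\O'$ in an imaginary quadratic field $K$, with $p$ prime to the conductor of $\O'$ and split or ramified in $K$, and the given $\phi$ lies in $\O'$. If $E$ is supersingular then $\End(E)$ is instead a maximal order in a quaternion algebra, but $\phi$ still generates an imaginary quadratic order $\Z[\phi]$, say with $\phi^2-t\phi+m=0$ and $t^2-4m<0$; setting $K=\Q(\sqrt{t^2-4m})$, the fact that $\End(E)\otimes\Q_p$ is ramified forces $p$ to be inert or ramified in $K$. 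So in either case $\phi$ belongs to an order $\O'\supseteq\Z[\phi]$ of an imaginary quadratic field $K$ in which $p$ is non-split, and, shrinking $\O'$ if necessary, $p$ is prime to the conductor of $\O'$.

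Next I would lift the curve. By the theory of complex multiplication recalled just before Proposition~\ref{isogeny-correspondence}, the elliptic curves over $\C$ with endomorphism ring $\O'$ correspond to $\Cl(\O')$; their $j$-invariants are algebraic integers — the roots of the class polynomial $H_{\O'}(X)\in\Z[X]$ — and, being integral, each such curve has potential good reduction at every finite prime. Fix a number field $L$ containing all these $j$-invariants and a prime $\mathfrak p\mid p$ of $L$. The key classical input is Deuring's reduction theory: the reductions modulo $\mathfrak p$ of the CM curves by $\O'$ (after a finite extension so that good reduction is attained) are exactly the curves over $\bar\F_p$ with the expected endomorphism type — ordinary with $\End\cong\O'$ when $p$ splits or ramifies in $K$, and supersingular when $p$ is inert or ramified — with the residues of their $j$-invariants running over the roots of $H_{\O'}(X)$ reduced mod $p$. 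Since $j(E)$ is one of these roots, I may choose $\tilde E$ over a finite extension of $L$ having good reduction at a prime above $\mathfrak p$ and $j(\tilde E)\equiv j(E)$; its reduction then has $j$-invariant $j(E)$, hence is a twist of $E$, and after one further finite extension of $L$ is isomorphic to $E$. Composing this isomorphism with the reduction map yields a map $\End(\tilde E)\to\End(E)$, which is injective because nonzero isogenies reduce to nonzero isogenies of the same degree. This already gives part~(1).

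For part~(2) and the endomorphism: when $E$ is ordinary one has $\End(\tilde E)\cong\O'=\End(E)$, so the injection $\End(\tilde E)\hookrightarrow\End(E)$ is between isomorphic orders of finite index in the same field, hence an isomorphism — this is part~(2), and in particular the preimage $\tilde\phi$ of $\phi$ is the required lift. When $E$ is supersingular only a lift of $\phi$ is needed: $\phi\in\Z[\phi]\subseteq\O'\cong\End(\tilde E)$ reduces to an endomorphism of $E$ with the same characteristic polynomial $x^2-tx+m$, and after composing the chosen isomorphism $\bar E\cong E$ with a suitable unit of $\End(E)$ this reduction equals $\phi$, so $\phi$ itself lifts. (In the ordinary case one may alternatively invoke the Serre--Tate/Lubin canonical lift: the deformation functor of the ordinary curve over $W(\F_q)$ is pro-representable and has a distinguished lift to which every endomorphism lifts; the algebraicity of its $j$-invariant, again from CM theory, lets it descend to a number field, making part~(2) immediate.)

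The genuinely hard step is the Deuring reduction theory invoked above: matching the mod-$p$ reductions of the CM $j$-invariants with the ordinary and supersingular curves in characteristic $p$, and, in the ordinary case, the exact preservation of $\End$ rather than merely an inclusion. Proving this requires tracking the $\ell$-adic Tate modules and the $p$-divisible group of a CM curve under reduction and identifying the reduction of the CM-theoretic Frobenius. I would quote this classical fact from \cite{deuring} (see also \cite{cox} and \cite{silverman2}); everything else — potential good reduction of CM curves, injectivity of reduction on $\End$, and twisting to match $j$-invariants — is routine.
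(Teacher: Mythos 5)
First, a point of reference: the paper does not prove this statement at all --- it is invoked as Deuring's classical lifting theorem with a citation to \cite{deuring}, and only the ordinary case (part (2)) is actually used later, in the proof of Theorem~\ref{isographexpands}. So there is no internal argument to compare against, and your proposal has to be judged on its own. Your ordinary-case outline is the standard route, but observe that the ``Deuring reduction theory'' you quote --- that the ordinary curves over $\bar{\F}_p$ with endomorphism ring $\O'$ are precisely the reductions of the characteristic-zero curves with CM by $\O'$ --- already contains the surjectivity that is the whole content of the lifting statement in that case; what you add on top of the citation is only the (correct) remark that a unital ring embedding of an imaginary quadratic order into an isomorphic order is forced to be onto, which gives part (2). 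That is consistent with how the paper treats the result, and your parenthetical Serre--Tate canonical-lift alternative is indeed a clean self-contained way to get part (2), which is all the paper needs.

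The genuine gap is in the supersingular half of part (1). Your lifted endomorphism $\tilde{\phi}$ reduces to some $\psi \in \End(E)$ with the same minimal polynomial as $\phi$, and you assert that $\psi$ can be made equal to $\phi$ ``after composing the chosen isomorphism $\bar{E} \iso E$ with a suitable unit of $\End(E)$.'' Changing that identification conjugates $\psi$ by an automorphism of $E$, and $\Aut(E)=\{\pm 1\}$ except for $j=0,1728$; more to the point, two embeddings of $\Z[\phi]$ into the maximal quaternion order $\End(E)$ with the same characteristic polynomial are conjugate in the quaternion algebra by Skolem--Noether, but in general not by a unit of the order, let alone by $\pm 1$. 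Arranging that the lift reduces to the \emph{given} $\phi$, rather than to some other copy of $\Z[\phi]$ inside $\End(E)$, is exactly the delicate point of Deuring's theorem, and it requires the full argument of \cite{deuring} (see also \cite{langellipticfunctions}), or a deformation-theoretic proof that lifts the pair $(E,\phi)$ together --- not an after-the-fact twisting adjustment. Two minor slips in the same passage: to make $p$ prime to the conductor you must \emph{enlarge} $\Z[\phi]$, not shrink $\O'$; and in the supersingular case you would still owe an argument that $j(E)$ occurs among the reductions of the CM curves for your chosen $\O'$, which is again part of the quoted correspondence rather than a consequence of it.
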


\begin{proof}[Proof of Theorem~\ref{isographexpands}:] Since the curves in
Theorem~\ref{isographexpands} are ordinary, there exists an imaginary
quadratic order $\O_D$ such that $\End(E) = \O_D$. Observe
that $(\log 4q)^{B} \geq (\log |D|)^{B}$, since $D = t^2 - 4q$ where the trace $t$ satisfies the
Hasse bound $|t| < 2\sqrt{q}$. Hence $(\log 4q)^{B}$ satisfies
the condition for $M$ in Theorem~\ref{complex-expanders}.

We will now show that the graph $\Gamma$ in
Theorem~\ref{complex-expanders} is isomorphic to the graph defined in
Theorem~\ref{isographexpands}. The elliptic curves in $\Ell(\O_D)$ are
all defined over the ring class field $H$ of $\O_D$. Identification of
the vertices is accomplished by choosing a prime $\mathfrak{p} \subset
H$ lying over the characteristic $p$ of $\F_q$, and reducing curves in
$\Ell(\O_D)$ to obtain curves in $S_{N,q}$. Theorem~\ref{deuring} shows that this identification is surjective.
To show that it is injective, consider two non-isomorphic curves
$E_\mathfrak{a}$ and $E_\mathfrak{b}$ in $\Ell(\O_D)$, meaning that
$\mathfrak{a}$ and $\mathfrak{b}$ lie in different ideal classes in
$\Cl(\O_D)$.  By the Chebotarev density theorem, there exists
an unramified prime ideal $\mathfrak{c}$
belonging to the same ideal class as $\mathfrak{a} \mathfrak{b}^{-1}$; note
in particular that $\mathfrak{c}$ is not principal.
By Proposition~\ref{isogeny-correspondence}, the ideal $\mathfrak{c}$
induces an isogeny
$\phi$ between $E_\mathfrak{a}$ and $E_\mathfrak{b}$ having degree
equal to $N(\mathfrak{c})$. If the reductions
$\bar{E}_\mathfrak{a}$ and $\bar{E}_\mathfrak{b}$ of
${E}_\mathfrak{a}$ and ${E}_\mathfrak{b}$ modulo $\mathfrak{p}$ were
to be somehow isomorphic, then $\phi$ would represent an endomorphism
of $\bar{E}_\mathfrak{a}$, of degree $N(\mathfrak{c})$.   However, we know the endomorphism ring of
$\bar{E}_\mathfrak{a}$ is
equal to $\O_D$, and no element of $\O_D$ has norm equal to
$N(\mathfrak{c})$ (this is because $\Q(\sqrt{D})$ is an imaginary quadratic number field).
Thus the
endomorphism ring $\O_D$ cannot contain any endomorphism of degree equal
to $N(\mathfrak{c})$.

Likewise, for each prime $\ell < (\log 4q)^{B}$, the reduction
map modulo $\mathfrak{p}$ sends every isogeny of degree $\ell$ in
characteristic $0$ to an isogeny of degree $\ell$ in characteristic
$p$. All isogenies in characteristic $p$ are obtained in this way,
since isogenies of degree $\ell$ are given by the roots of the modular
polynomial $\Phi_\ell(x,y)$, and this polynomial does not have more
roots over the algebraic closure in characteristic $p$ than in characteristic $0$.
\end{proof}

\section{Relationship with discrete logarithms}\label{dlog}

Given a generator $g$ of a cyclic group $G$ of order $n$, the discrete
logarithm of an element $h$ of $G$ is defined to be the residue class $x$ of
integers mod $n$ such that $g^x = h.$ The elliptic curve discrete logarithm
problem is the problem of computing discrete logarithms when $G$ is the
group of points on an elliptic curve defined over a finite field $\F_q$.
Determining the difficulty of this problem is important because much of
elliptic curve cryptography is based, at least conjecturally, on the
infeasibility of computing discrete logarithms on elliptic curves
defined over a finite field.

   Galbraith~\cite{gal99} has observed that given an
efficiently computable isogeny $\phi\colon E \to E'$, one can
compute discrete logarithms on $E$ by computing discrete logarithms
on $E'$. The procedure is as follows:~given $P,Q \in E$, compute
$\phi(P)$ and $\phi(Q)$, and determine the discrete logarithm $x$ of
$\phi(Q)$ on $E'$ with respect to the generator $\phi(P)$. The equation
$x \cdot \phi(P) = \phi(Q)$ determines the solution for $x$ modulo the kernel of $\phi$.  When $\phi$ is furthermore a low-degree isogeny, it is both efficiently computable and has small kernel (which itself can be efficiently enumerated).  Such an isogeny provides a
reduction between the discrete logarithm problems on $E$ and
$E'$, in time polynomial in $\log q$ and the degree. Moreover, a theorem of Tate~\cite{tate1} states that two
elliptic curves $E$ and $E'$ defined over $\F_q$ have the same number
of points if and only if they are isogenous.  Tate's theorem guarantees the existence of an isogeny defined over $\F_q$ between curves in their equivalence classes, which computationally amounts to one between the curves themselves (see footnote~\ref{footnote2}).  However, this isogeny usually is  difficult to compute and has enormous degree.

We now use the above observation to give a proof of
Theorem~\ref{ranreduce}. Our proof consists of showing that, for
curves of the same level, a composition of low-degree isogenies between them exists.  Indeed, though the degree of such a composition may be very large, it can be computed efficiently; furthermore, it gives efficient reductions between all curves it connects.

\begin{proof}[Proof of Theorem~\ref{ranreduce}:]
Returning to the isogeny graph of \thmref{isographexpands}, let $S$ denote the
  subset consisting of the $\mu$-fraction of elliptic curves to which
  the algorithm $\A$ applies. Let $E$ be any curve of the same level
  as the curves in $S$.  Because of the effective upper bounds on class numbers, one has that $\log|S_{N,q}| \le c' \log q$, for some $c'>0$.
  Construct  a random walk of length
  $Cc' (\log q)/\log \log q$ starting at $E$, where $C$ is
  the constant in Corollary~\ref{mixingcor}.  Let $\phi$ denote the isogeny
  equal to the composition of the isogenies represented by the edges
  comprising the random walk. Then $\phi$ can be evaluated in
  polynomial time, and hence the discrete logarithm problem on $E$ can
  be solved efficiently by querying $\A$, as long as the random walk
  above lands in $S$.  By Corollary~\ref{mixingcor}, the probability
  that the random walk lands in $S$ is at least $\frac{\mu}{2}$, so by
  repeating this process until the walk lands in $S$, we can solve
  discrete logarithms on $E$ in probabilistic polynomial time using an
  expected number of queries to $\A$ bounded by $\frac{2}{\mu}$.
\end{proof}

\section{Reductions between different levels}\label{sec:levels}

It is natural to ask whether the equivalence of discrete logarithms
holds for elliptic curves in different levels. We begin by observing
that the CM field $\End(E) \otimes \Q$ is the same for all curves $E
\in S_{N,q}$ regardless of level. Moreover, two curves $E, E'$ have
the same level if and only if the conductors of their endomorphism
rings in $\End(E) \otimes \Q$ are equal. It is thus natural to define
the \emph{conductor gap} to be the value of the largest prime factor
at which the prime factorizations of the conductors of $\End(E)$ and
$\End(E')$ differ; in addition, for a single curve $E$ we define the
conductor gap of $E$ to be the maximal possible conductor gap over
all possible pairs of isogenous $E, E'$. The conductor gap provides
a rough measurement of how much the levels of $E$ and $E'$ differ.

Given any curve $E$ whose endomorphism ring has conductor $c$, it is
possible to compute a curve $E'$ with conductor $c\ell$ together with
an isogeny $E \to E'$ of degree $\ell$ in time $O(\ell^3)$; the reverse, starting from $E'$ of
conductor $c\ell$ and ending up with $E$ of conductor $c$, is also
possible in the same amount of time (\cite{koh96, gal99, engemodpoly}).
 Consider a union of any number of levels which collectively have conductor gap  bounded polynomially in $\log q$.  Though the individual sizes of each level may be difficult to compute, formula (\ref{rayclassnumber}) or \cite[Cor.~7.28]{cox} allows one to compute their relative sizes efficiently.  By weighing these sizes it is possible to select a level at random with probability proportional to its total size amongst this union.  This level can be reached by appropriate low degree isogenies.  Thus it is possible to reach a random curve through walks of low degree isogenies, and
 it follows that
Theorem~\ref{ranreduce} holds for the union of any number of levels
which collectively have conductor gap  bounded polynomially in $\log q$.

Large conductor gaps do pose an obstacle in the statement of
Theorem~\ref{ranreduce}, but they rarely arise in practice. Indeed,
every curve $E \in S_{N,q}$ has at least the endomorphisms $\Z \subset
\End(E)$ and $\pi_q \in \End(E)$, with $\pi_q$ denoting the Frobenius
endomorphism. The discriminant of the quadratic order $\Z[\pi_q]$ is
equal to $t^2 - 4q$ where $t = q+1-N$, and the conductor of any curve
in $S_{N,q}$ must be an integer $c$ satisfying $c^2 \div (t^2 -
4q)$. Thus, if $t^2-4q$ is square free, then all curves in $S_{N,q}$
are of the same level, and in this case the level restriction in
Theorem~\ref{ranreduce} is vacuous. More generally, as long as
$t^2-4q$ has no large repeated prime factors, the statement of
Theorem~\ref{ranreduce} holds for all of $S_{N,q}$, by the previous
paragraph.

We can analyze the expected frequency of large conductor gaps as
follows. The Hasse-Weil bound on $t$ implies $-4q \leq t^2 - 4q \leq
0$. A random integer within this interval has probability $1 -
\prod_{p>\beta}^{2 \sqrt{q}} (1-p^{-2})$ of admitting a repeated prime
factor $p > \beta$. Since this probability is bounded above by
$O(1/\beta)$, we expect as a heuristic that, for any positive $\beta<p$,
random choices of $(N,q)$ will admit repeated prime factors exceeding
$\beta$ with probability $1/\beta$.  In fact, \cite[Theorem 1]{lucashpar}
rigorously proves the probability estimate $\f{(\log p)^2}{\beta}$ for $\beta \ll p^{1/6}$, where $q=p$ is odd.
Therefore, in most cases,
conductor gaps between elliptic curves are quite small and we can
ignore the effects of differing endomorphism rings in our discrete
logarithm comparisons.  For example, an investigation of nine randomly
generated curves listed in international standards documents reveals
that all of them satisfy $c_{N,q} \leq 3$
(cf. \secref{nistsection}). In fact, a somewhat surprising observation
is that there is currently no efficient algorithm to construct pairs
of elliptic curves with conductor gaps that are \emph{not} small, even
though such pairs are known to exist in abundance
(cf. \secref{problemsec}).

\section{Government standards for curves}\label{nistsection}

In the previous section we showed that all curves in an isogeny class
have identical security on average whenever the conductor gap is
small.  However, determining the conductor gap of a curve requires
factoring a large integer and hence is a nontrivial computation. In
this section we provide the computation of the conductor gap for a
family of randomly selected curves which appear as part of a US
government standard.

In 2000 the National Institute of Standards and Technology (NIST), a
branch of the United States Department of Commerce, introduced a
family of elliptic curves as standards for cryptographic
applications \cite{standardsdocumentsnist}.  The selection of these curves was the
outcome of several years of testing.   The NIST curves are
generated by the values of secure hash functions applied to
publicly-revealed seeds, making it plausible that they were not excessively manipulated before their public
release.  However, the user cannot be totally confident that there is not a backdoor or weakness in the
published curve.

Though it is hard to imagine arguing directly that discrete logarithms on a
\emph{specific} elliptic curve do not have good attacks, our results
can be used to give some assurance that
 the NIST curves are not weaker than comparable
elliptic curves.
Namely,
\thmref{ranreduce} and the comment immediately following it
 show that the discrete logarithm problem has roughly equivalent difficulty as one ranges over curves defined over the same field, and whose  endomorphism rings have small
conductor gap.

Some of the NIST curves are \emph{Koblitz}
curves~\cite{cmkoblitz}, which are not expected to have small
conductor gaps.  However, for the remaining  NIST curves, some
lengthy computations showed that the conductor gap is very small:~all
but one curve had a conductor gap of 1, and the only exception had a
conductor gap of 3.  That means that in the former cases, the isogeny class consists of only one level, and \thmref{ranreduce} provides a full equivalence of discrete logarithms.  Only in the exceptional case with conductor gap 3 must one navigate between levels (the topic of \secref{sec:levels}); this can easily be done  by constructing a degree 3 isogeny between them.
Therefore we may conclude that these curves have
typical difficulty among all elliptic curves defined over the same
field and having the same number of points.

As an  example, consider the NIST curve B-571, which is
given by the Weierstrass equation $ y^2 + xy = x^3 + x^2 + b$ over
the field $\F_{2^{571}}$.  Here $b$ is an element of
$\F_{2^{571}}$ which is cumbersome to describe but can be found on
p. 47 of \cite{standardsdocumentsnist}.  It has discriminant
\begin{equation}
\aligned d = -
&210092063841005638410400838462812964562253124135523060955333\backslash
\\
 & 767330638498791801056156659734237518468659692798673383993911\backslash \\
 & 78057790576859207002963481895511008772786625592941143
\endaligned
\end{equation}
and prime factorization
\begin{equation*}
\aligned
= & -137 * 1502689 * 5608493523058319 * 3563521804312876303
\\
& \ * 46393104672338327566438581332776443577 \\
& \ * 1100628851017477373738489717699925956411395060089467152067605\backslash \\ & \ \ \ \  28637300688225399301632484625559
\endaligned
\end{equation*}
(we have written out the decimal expansion of $d$ over several
lines owing to its length). One can determine the conductor gap
knowing this factorization:~it is the largest square factor, which in this example is 1.

 We wish to mention that finding
the above factorization was far from trivial, taking about 5 days
on a dedicated  cluster in the Netherlands which utilized
specialized factoring software.  Although determining the
conductor gap is useful in assuring that a given elliptic curve is
not cryptographically weak,  clearly  this is not a test which the
average user can perform.  It may be good practice for standards
bodies to publish the factorization of the discriminants along
with their recommended curves so that users have this information.

\section{Open problems} \label{problemsec}

In this section we address two shortcomings of
Theorem~\ref{ranreduce}.  The first is that the Theorem, as stated,
applies only to individual levels of curves. As noted just after its
statement and further in Sections~\ref{sec:levels} and
\ref{nistsection}, curves whose levels differ by a ratio composed of
small primes can be bridged by random isogenies; the issue is when the
conductor gap has a large prime factor.  The second is the strong
analytic assumption of the Generalized Riemann Hypothesis.  We conclude by discussing some related cryptographic problems.

\subsection{Large conductor gaps}\label{subsec:gaps}

The equivalence result of Theorem~\ref{ranreduce} is incomplete in the
sense that it does not apply to curves having a large conductor gap.
Pairs of such curves certainly exist, but no efficient method is known
for finding them, and indeed no explicit example is known at the
present time.  A curve chosen at random will have conductor greater
than $\ell$ with probability heuristically equal to $1/\ell$ (see
Section~\ref{sec:levels}).   As we mentioned in \secref{sec:levels}, it is possible to produce an explicit isogeny between two curves with conductor gap $\ell$ in time $O(\ell^3)$, which for large $\ell$ is far slower than solving discrete logarithms themselves.  Additionally,  it was recently shown in \cite{engecm} how to create special pairs of curves with conductor gap $\ell$ in time $O(\ell^2)$, without finding an explicit isogeny between them.
  All of these methods are too
slow for large values of $\ell$, but leave an intermediate range of conductor gaps which presently cannot bridged by computable isogenies.

The conductor gap question is especially pertinent for certain special
classes of curves in cryptography such as pairing friendly curves (see
\cite{taxon}).  All constructible examples of such curves are presently
restricted to small discriminants, with the exception of certain
families of curves having conductor gaps which fall within the
abovementioned intermediate range~\cite{gaetan}; note, however, that
these conductor gaps are still small enough that improvements such as
Moore's law affect the boundaries of this range. There is some concern
(although no proof) that discrete logarithms on such curves are weaker
than on general pairing friendly curves.  Achieving large conductor gaps for
pairing friendly curves would help alleviate this concern, since our
work  then implies that pairing friendly curves with large
discriminant are provably as secure as random pairing friendly curves.

\subsection{The assumption of GRH}

The theorems in this paper all assume the Generalized Riemann Hypothesis, which is used to obtain the error estimate in (\ref{iwanshows}).  Lighter analytic assumptions still imply nontrivial error estimates; for example the Generalized Lindel\"of Hypothesis instead implies a bound of $O_{\e,K}(x^{1/2+\e}Q^\e)$ for any $\e>0$  \cite{iwaniec}.  This corresponds to a subexponential time algorithm in Theorem~\ref{ranreduce}, as opposed to a polynomial time one.

An unconditional proof of expansion seems out of reach at present.  In the introduction it was explained why expansion bounds for $\l_\chi$  imply bounds on the least quadratic nonresidue, and thus at present require an analytic assumption.  The recent preprint \cite{lauwu} considers cancellation in the sums $\l_\chi$ defined in (\ref{abelcayeign})  for other characters.

Intriguingly, it has been widely speculated that the GRH implication of $B>2$ in \thmref{expthm} is  not sharp, and that $B>1$ is in fact expected. This feature dates back to the suggestion of Littlewood that the Euler product for $L(1,\chi)$ could be approximated by the partial Euler product over primes smaller than $(\log Q)^B$, for any $B>1$.  This  approximation is consistent with the best known constructions of lower bounds for the error terms in the sums (\ref{iwanshows}), and for related problems such as the least nonresidue problem  \cite{littlewood,gramrose,gransoundgafa}.  Recent work of  \cite{vaughn,montvaughn,gransoundextreme} supports the validity of the wider range $B>1$.  This bound is also sharp from the point of view of the Alon-Roichman Theorem \cite{alonreich}, which asserts that expanders must have at least logarithmic degree in the size of the graph.

Finally, the constants in (\ref{iwanshows}) are effective and numerical values for them have been obtained in \cite{bachpap1,bachpap2}.

\subsection{Generalizations to other cryptographic problems}

    The elliptic curve discrete logarithm problem can be generalized to
    Jacobians of hyperelliptic curves or other curves of higher genus, and
    recently there has been some progress in obtaining efficiently
    computable isogenies between such abelian varieties~\cite{bensmith}.
    At present, not enough such isogenies are known to enable any statement
    about reducibility of discrete logarithms between such Jacobians, but further
    developments could likely yield new results in this area.

    In a different vein, one can consider alternative cryptographic problems
    such as the Diffie-Hellman problem instead of the discrete logarithm
    problem.  For example, the recent
    paper~\cite{jv08} shows that, for curves over a prime field,
    computing the least significant bit of a Diffie-Hellman secret with
greater than 50\% probability over a non-negligible fraction of curves
    is almost always equivalent
    to solving the full Diffie-Hellman problem itself (assuming GRH).  The proof
    relies heavily on the rapid mixing properties of isogeny graphs for
    ordinary elliptic curves.

    \vspace{.6 cm}

{\bf Acknowledgements:} It is a pleasure to thank Noga Alon, R. Balasubramanian, Dan Boneh, Ehud de
Shalit, Noam Elkies, Andrew Granville, Dimitar Jetchev, Nati Linial, Alexander Lubotzky, Peter
Montgomery, Kumar Murty, Ravi Ramakrishna, Ze'ev Rudnick, Peter Sarnak, Adi Shamir, Igor Shparlinksi,
and Martin Weissman for their helpful comments and suggestions. In
particular, we wish to thank Peter Montgomery for his assistance and
advice in factoring the discriminants of NIST curves in
\secref{nistsection}, and for making computational resources available
to us.

\noindent Addresses:

\noindent David Jao\\
Department of Combinatorics and Optimization\\
University of Waterloo\\
Waterloo, ON  N2L 3G1\\
Canada\\
\texttt{djao@math.uwaterloo.ca}

\vspace{.3 cm}

\noindent Stephen D. Miller
\\  Department of Mathematics \\ 110 Frelinghuysen Road \\
Rutgers, The State University of New Jersey \\ Piscataway, NJ
08854 \\  {\tt miller@math.rutgers.edu}

 \vspace{.3 cm}

\noindent Ramarathnam Venkatesan \newline Microsoft
Research Cryptography and Anti-Piracy Group
\\ 1 Microsoft Way \\ Redmond, WA 98052 \newline
\hspace{5cm} and\\
\noindent
Cryptography, Security and Applied Mathematics Research Group, Microsoft
Research India\\  Scientia - 196/36 2nd Main,
Sadashivnagar, Bangalore 560 080, India\\
 {\tt
venkie@microsoft.com}

\end{document}